\newtheorem{theorem}{Theorem}
\newtheorem{proposition}[theorem]{Proposition}
\newtheorem{lemma}[theorem]{Lemma}
\newtheorem{corollary}[theorem]{Corollary}
\newenvironment{keywords}{
    \noindent\textbf{Keywords: }
    \itshape
}
\title{On the spectral radius of unbalanced signed bipartite graphs}
\author[1]{Cristian M. Conde\thanks{cconde@campus.ungs.edu.ar}}
\author[1]{Ezequiel Dratman \thanks{edratman@campus.ungs.edu.ar}}
\author[1]{Luciano N. Grippo \thanks{lgrippo@campus.ungs.edu.ar}}
\date{}
\affil[1]{Universidad Nacional de General Sarmiento. Instituto de Ciencias, Argentina. CONICET, ICI-UNGS, Buenos Aires, Argentina}
\begin{document}
	\maketitle

	\begin{abstract}
		A signed graph is one that features two types of edges: positive and negative. Balanced signed graphs are those in which all cycles contain an even number of positive edges. In the adjacency matrix of a signed graph, entries can be $0$, $-1$, or $1$, depending on whether $ij$ represents no edge, a negative edge, or a positive edge, respectively. The index of the adjacency matrix of a signed graph $\dot{G}$ is less or equal to the index of the adjacency matrix of its underlying graph $G$, i.e., $\lambda_1(\dot{G}) \le \lambda_1(G)$. Indeed, if $\dot{G}$ is balanced, then $\lambda_1(\dot{G})=\lambda_1(G)$. This inequality becomes strict when $\dot{G}$ is an unbalanced signed graph. Recently, Brunetti and Stani\'c found the whole list of unbalanced signed graphs on $n$ vertices with maximum (resp. minimum) spectral radius. To our knowledge, there has been little research on this problem when unbalanced signed graphs are confined to specific graph classes. In this article, we demonstrate that there is only one unbalanced signed bipartite graph on $n$ vertices with maximum spectral radius, up to an operation on the signed edges known as switching. Additionally, we investigate unbalanced signed complete bipartite graphs on $n$ vertices with a bounded number of edges and maximum spectral radius, where the negative edges induce a tree.  
	\end{abstract}
	
\begin{keywords}
signed bipartite graphs; spectral radius; unbalanced signed graphs
\end{keywords}

	\section{Introduction}
	
		Given a graph $G$, a \emph{signed graph} $\dot{G}$ is defined as a pair $(G,\sigma)$, where $\sigma$ is the \emph{signature} function that assigns a value $\sigma(ij)\in \{-1,1\}$ to each edge $ij\in E(G)$. For convenience, we denote $\dot{G}=(G,H^{-})$, where $H$ is the induced subgraph for the negative edges of  $\dot{G}$.  The edge set of $\dot{G}$ comprises both positive and negative edges. We interpret any given graph as a signed graph, where all edges are positive by default, and vice versa. The graph $G$ is referred to as the \emph{underlying} graph of $\dot{G}$, and it can be obtained by reversing the sign of each negative edge in $\dot{G}$. A signed graph $\dot G$ is termed \emph{balanced} if each of its cycles has an even number of negative edges. The adjacency matrix of a signed graph $\dot{G}$ is represented by a square matrix $A(\dot{G})=(a^{\sigma}_{ij})$, where $a^{\sigma}_{ij} =\sigma(ij)a_{ij}$, and $A(G)=(a_{ij})$ denotes the adjacency matrix of $G$. We denote by $\lambda_1(G)$ and $\rho(G)$ the maximum eigenvalue and the spectral radius of $A(G)$, respectively.  In addition, we refer to $\lambda_1(G)$ and $\rho(G)$ as the index and the spectral radius, respectively. 
	
		Given a signed graph $\dot{G}=(G, \sigma)$ and a subset $U\subset V(G)$, $\dot{G}^U$ denotes the signed graph derived from $\dot{G}$ by reversing the signs of the edges in $[U,V(G)\setminus U]=\{uv:\, u\in U\mbox{ and }v\in V(G)\setminus U\}$. This operation is commonly referred to as \emph{switching}, a concept introduced in \cite{Zaslavsky}. Two signed graphs $\dot F$ and $\dot G$ are \emph{switching equivalent} if $\dot F$ can be derived from $\dot G$ through a switching transformation. A signed graph is balanced if it is switching equivalent to its underlying graph \cite{Zaslavsky}. 
		
		The problem of finding the list of graphs with $n$ vertices within a specific graph class that have maximum spectral radius or index, has attracted significant interest among researchers, particularly those engaged in spectral graph theory. In the context of signed graphs, it is known that for the graphs $\dot G$ and $G$, the relation $\lambda_1(\dot G)\le\lambda_1(G)$ holds; equality is achieved if and only if $\dot G$  is balanced \cite{Stanic2019}. This fact prompts the search for the list of unbalanced signed graphs on $n$ vertices, with underlying graphs that are part of a certain graph class $\mathcal G$, and that have either the maximum index or spectral radius. 
		
		In 2017, Koledin and Stani\'c \cite{Koledinetal2017} presented a structural result regarding signed graphs with a fixed order, size, and a specific number of negative edges, all having maximum index. Furthermore, in their paper, they proposed candidates for achieving the maximum index in cases where the underlying graph is complete. In 2018, Stani\'c \cite{Stanic2018} examined the behavior of the index of signed graphs under minor perturbations. Subsequently, in 2020, Akbari et al. \cite{Akbarietal2020} identified signed complete graphs with $n$ vertices and $m$ negative edges inducing a tree, where $m<n-1$, that achieve the maximum index. Furthermore, Ghorbani and Majide \cite{Ghorbanietal2021} proved a revised version of the conjecture originally proposed by Koledin and Stani\'c in \cite{Koledinetal2017}. Li et al.\cite{Lietal2023} addressed the remaining case ($m=n-1$) by computing the unbalanced signed complete graph with the maximum index, where the negative edges induce a spanning tree. Further advances related to singed graphs can be found in \cite{DeqiongLi2023, Wang2023, haemers24, wang2024}.
 
		We define $-\dot{G}$ as the signed graph obtained from $\dot G$ by reversing the sign of each edge. We say $\dot G$ is \emph{sign-symmetric} if it is switching equivalent to $-\dot G$. If $\dot G$ is a sign-symmetric signed graph, then $\rho(\dot G)=\lambda_1(\dot{G})$. Note that if $G$ is a bipartite graph, then $\dot G$ is sign-symmetric. For further insights into sign-symmetric signed graphs, we direct interested readers to \cite{Belardo-2018}. 
		
		One aspect of extremal graph theory involves identifying graphs with specific extremal spectral parameters, such as the spectral radius or the index, within a defined class of graphs. In recent years, there has been substantial interest in exploring this direction, particularly in the field of signed graphs. Belardo et al. identified unbalanced signed unicyclic graphs on $n$ vertices with extremal spectral radius and index \cite{Belardo-2018}. More recently, Brunetti and Stani\'c have discovered unbalanced signed unicyclic graphs with extremal radii and indices \cite{BrunettiandStanic2022}. Our contribution to this field comprises two key findings, both centered on unbalanced signed bipartite graphs, as discussed in the preceding paragraph, which are sign-symmetric, and thus their indices are equal to their spectral radii. 
		
		Firstly, we identify unbalanced signed bipartite graphs whose underlying graph is isomorphic to $K_{r,s}$ with $r\leq s$, featuring at most $\frac{s}{2}-1$ negative edges inducing a tree, and having the maximum spectral radius. Secondly, we detect unbalanced signed bipartite graphs with $n$ vertices that achieve the maximum spectral radius.
		
		The article is organized as follows. Section \ref{sec: preliminaries} introduces some definitions and states preliminary results used throughout this article. In Section \ref{sec: Balanced signed complete bipartite graphs}, we characterize balanced signed complete bipartite graphs. In Section \ref{sec: Negative edges inducing a tree}, we list the unbalanced signed complete bipartite graphs with prescribed partition whose negative edges are bounded, induce a tree, and have maximum spectral radius. Section \ref{sec: Unbalanced signed complete bipartite graphs}  characterizes  unbalanced signed bipartite graph with a fixed number of vertices and maximum spectral radius. We conclude this manuscript with Section \ref{sec: Conclusion and further results}, which is devoted to conclusions and further results. 
	\section{Preliminaries}\label{sec: preliminaries}
	Let $\dot{G}=(G,H^{-})$ be a signed graph, it is noteworthy that $H$ is not empty graph and it does not have isolated vertices.	We use $\mathcal{M}_{r\times s}(\mathbb{R})$ to denote the collection of matrices with coefficients in $\mathbb R$, $r$ rows and $s$ columns, respectively, and in particular, when $r=s$, we employ $\mathcal{M}_r(\mathbb{R})$. 
	
	We will state a few results used throughout the article.
	\begin{lemma}\label{lem: adjacency matrix singned graphs}
		Let $\dot{G}=(G,H^{-})$ be a signed graph. Then we have
		\begin{equation*}
		A(\dot{G})=A(G)-2A(H). 
		\end{equation*}
	\end{lemma}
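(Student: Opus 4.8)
The plan is to verify the matrix identity entrywise, since $A(\dot G)$, $A(G)$, and $A(H)$ are all square matrices indexed by $V(G)$. First I would make explicit the convention that $A(H)$ denotes the adjacency matrix of $H$ regarded as a subgraph on the full vertex set $V(G)$; that is, a vertex of $G$ not incident to any negative edge simply contributes a zero row and column to $A(H)$. With this convention all three matrices share the same index set, the right-hand side $A(G)-2A(H)$ is well defined, and it suffices to compare the $(i,j)$ entries of both sides for every pair $i,j\in V(G)$.

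The core of the argument is then a short case analysis on the pair $ij$. If $ij\notin E(G)$, then $a_{ij}=0$ and $ij$ is not a negative edge, so both sides have entry $0$. If $ij$ is a positive edge, then $a_{ij}=1$ and $\sigma(ij)=1$, so the left-hand entry is $a^{\sigma}_{ij}=1$, while $ij\notin E(H)$ gives $(A(H))_{ij}=0$ and the right-hand entry is $1-2\cdot 0=1$. Finally, if $ij$ is a negative edge, then $a_{ij}=1$ and $\sigma(ij)=-1$, so $a^{\sigma}_{ij}=-1$ on the left, while $(A(H))_{ij}=1$ gives $1-2\cdot 1=-1$ on the right. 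Since the two matrices agree in every entry, the identity follows.

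I expect no substantive obstacle here: the statement is simply the unwinding of the definition $a^{\sigma}_{ij}=\sigma(ij)a_{ij}$ together with the single observation that, for any edge $ij\in E(G)$, one has $\sigma(ij)=1-2(A(H))_{ij}$, since $(A(H))_{ij}$ equals $1$ exactly when $ij$ is negative and $0$ when it is positive. The only point requiring a little care is the bookkeeping convention for $A(H)$ fixed above, which guarantees that the three matrices share a common index set so that the subtraction makes sense; once that is settled, the entrywise check is immediate and the proof concludes.
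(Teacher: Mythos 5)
Your proof is correct: the paper states this lemma without proof, treating it as an immediate consequence of the definition $a^{\sigma}_{ij}=\sigma(ij)a_{ij}$, and your entrywise case analysis (non-edge, positive edge, negative edge), together with the bookkeeping convention that $A(H)$ is padded with zero rows and columns to the full vertex set $V(G)$, is exactly that implicit argument made explicit. Nothing is missing.
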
	
%
\begin{lemma}
	If $C\in \mathcal{M}_n(\mathbb R)$, then $\rho(C)=\sqrt{ \rho(C^2)}$.
\end{lemma}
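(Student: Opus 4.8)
The plan is to reduce the statement to the spectral mapping theorem for the polynomial $p(x)=x^2$. Writing $\sigma(C)$ for the spectrum of $C$, that is, the set of its (possibly complex) eigenvalues, the spectral radius is $\rho(C)=\max\{|\lambda|:\lambda\in\sigma(C)\}$. Hence it suffices to establish the set equality $\sigma(C^2)=\{\lambda^2:\lambda\in\sigma(C)\}$ and then compare maxima of moduli. The first inclusion $\{\lambda^2:\lambda\in\sigma(C)\}\subseteq\sigma(C^2)$ is immediate: if $Cv=\lambda v$ for some $v\neq 0$, then $C^2v=\lambda^2 v$, so $\lambda^2\in\sigma(C^2)$.

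The reverse inclusion is the step requiring a little care, and is the main (though modest) obstacle, since an eigenvalue of $C^2$ need not come with an obviously associated eigenvector of $C$. Given $\mu\in\sigma(C^2)$, I would fix any complex square root $\sqrt{\mu}$ and use the factorization $C^2-\mu I=(C-\sqrt{\mu}\,I)(C+\sqrt{\mu}\,I)$, which is valid because $C$ commutes with scalar multiples of $I$. Taking determinants gives $0=\det(C^2-\mu I)=\det(C-\sqrt{\mu}\,I)\,\det(C+\sqrt{\mu}\,I)$, so at least one factor vanishes; hence $\sqrt{\mu}\in\sigma(C)$ or $-\sqrt{\mu}\in\sigma(C)$. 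In either case $\mu$ is the square of an element of $\sigma(C)$, which proves $\sigma(C^2)\subseteq\{\lambda^2:\lambda\in\sigma(C)\}$.

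Combining the two inclusions yields $\sigma(C^2)=\{\lambda^2:\lambda\in\sigma(C)\}$. Since the map $t\mapsto t^2$ is increasing on $[0,\infty)$, I can pull the squaring outside the maximum of the moduli, obtaining $\rho(C^2)=\max_{\lambda\in\sigma(C)}|\lambda^2|=\max_{\lambda\in\sigma(C)}|\lambda|^2=\Big(\max_{\lambda\in\sigma(C)}|\lambda|\Big)^2=\rho(C)^2$. Taking the nonnegative square root of both sides gives $\rho(C)=\sqrt{\rho(C^2)}$, as claimed. I expect no genuine difficulty beyond the determinant factorization argument; one could alternatively invoke the spectral mapping theorem directly, but the elementary factorization keeps the proof self-contained.
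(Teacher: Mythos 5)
Your proof is correct: the set identity $\sigma(C^2)=\{\lambda^2:\lambda\in\sigma(C)\}$, obtained from the factorization $C^2-\mu I=(C-\sqrt{\mu}\,I)(C+\sqrt{\mu}\,I)$ and a determinant argument, together with $|\lambda^2|=|\lambda|^2$, gives exactly $\rho(C^2)=\rho(C)^2$. The paper states this lemma without proof, treating it as a standard preliminary fact; your argument is precisely the canonical spectral-mapping justification one would cite, so there is no discrepancy to report.
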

Given $A \in \mathcal{M}_n(\mathbb R)$, we use $\textrm{Spec}(A)$ to denote the multiset of eigenvalues of $A$, where each eigenvalue appears according to its algebraic multiplicity. 
\begin{lemma}
	If $A\in \mathcal{M}_{r\times s}(\mathbb R)$ and $B\in \mathcal{M}_{s\times r}(\mathbb R)$, then $\textrm{Spec}(AB)\setminus\{0\}=\textrm{Spec}(BA)\setminus\{0\}$. 
\end{lemma}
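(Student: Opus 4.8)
The plan is to prove the sharper identity relating the two characteristic polynomials, namely $x^{s}\det(xI_r - AB) = x^{r}\det(xI_s - BA)$, and then to deduce the statement about nonzero eigenvalues by comparing the multiplicities of a fixed nonzero root on the two sides.

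To obtain this identity I would introduce the $(r+s)\times(r+s)$ block matrix
\[
M(x) = \begin{pmatrix} xI_r & A \\ B & I_s \end{pmatrix}
\]
and evaluate $\det M(x)$ in two ways using Schur complements. Eliminating the invertible bottom-right block $I_s$ gives $\det M(x) = \det(I_s)\,\det(xI_r - A I_s^{-1} B) = \det(xI_r - AB)$. For $x \neq 0$ the top-left block $xI_r$ is also invertible, and eliminating it gives $\det M(x) = \det(xI_r)\,\det\bigl(I_s - B(xI_r)^{-1}A\bigr) = x^{r}\det\bigl(I_s - x^{-1}BA\bigr) = x^{r-s}\det(xI_s - BA)$, where the last equality uses $I_s - x^{-1}BA = x^{-1}(xI_s - BA)$. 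Equating the two evaluations yields $\det(xI_r - AB) = x^{r-s}\det(xI_s - BA)$ for all $x \neq 0$; multiplying through by $x^{s}$ gives the claimed polynomial identity, which is valid for every $x$ since both sides are polynomials agreeing at infinitely many points.

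Finally I would read off the conclusion. Fix $\lambda \neq 0$. The factor $x^{s}$ (resp.\ $x^{r}$) vanishes only at $0$, so the multiplicity of $\lambda$ as a root of $x^{s}\det(xI_r - AB)$ coincides with its multiplicity as a root of $\det(xI_r - AB)$, and similarly on the right-hand side with $\det(xI_s - BA)$. The polynomial identity therefore forces the algebraic multiplicity of $\lambda$ in $\textrm{Spec}(AB)$ to equal its algebraic multiplicity in $\textrm{Spec}(BA)$, which is exactly the asserted equality of multisets $\textrm{Spec}(AB)\setminus\{0\} = \textrm{Spec}(BA)\setminus\{0\}$.

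The one point requiring care is that the second Schur-complement computation presupposes $xI_r$ invertible, i.e.\ $x \neq 0$; I would not try to force a single computation valid at $x = 0$, but instead establish the identity on $\{x \neq 0\}$ and extend it by the polynomial-identity principle. A determinant-free alternative is the eigenvector transfer $v \mapsto Bv$, which carries a $\lambda$-eigenvector of $AB$ to a nonzero $\lambda$-eigenvector of $BA$ whenever $\lambda \neq 0$; this quickly matches the nonzero eigenvalues as sets, but recovering equality of algebraic multiplicities would require passing to generalized eigenspaces, so the determinant route is the more economical one here.
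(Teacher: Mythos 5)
Your proof is correct. Note that the paper states this lemma as a known preliminary fact and provides no proof of its own, so there is no in-paper argument to compare against; your Schur-complement derivation is the standard one. Both determinant evaluations of the block matrix are right, the restriction to $x\neq 0$ in the second evaluation is properly repaired by the polynomial-identity principle, and reading off the multiplicity of a nonzero root $\lambda$ from the identity $x^{s}\det(xI_r-AB)=x^{r}\det(xI_s-BA)$ does yield equality of the multisets $\textrm{Spec}(AB)\setminus\{0\}$ and $\textrm{Spec}(BA)\setminus\{0\}$ with algebraic multiplicities --- which is the form the paper actually needs, since it defines $\textrm{Spec}$ as a multiset and applies the lemma to spectra of block matrices in the subsequent corollary. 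Your closing remark is also apt: the eigenvector-transfer shortcut only matches the nonzero eigenvalues as sets, so the determinant route is indeed the economical way to get multiplicities.
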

\begin{corollary}\label{cor: square spectrum}
	Let $A\in \mathcal{M}_{r\times s}(\mathbb R)$ and $B\in \mathcal{M}_{s\times r}(\mathbb R)$ be two matrices. If
	\[C=\begin{pmatrix}
		AB& 0\\
		0 & BA
		\end{pmatrix},\]
	then $\textrm{Spec}(C)\setminus\{0\}=\textrm{Spec}(AB)\setminus\{0\}$. In addition, $\lambda_1(C)=\lambda_1(AB)$.
\end{corollary}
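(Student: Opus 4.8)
The plan is to exploit the block-diagonal shape of $C$ together with the preceding lemma on the nonzero spectra of $AB$ and $BA$. First I would record that, because $C$ is block diagonal, its characteristic polynomial factors as
\[
\det(xI-C)=\det(xI_{r}-AB)\,\det(xI_{s}-BA),
\]
so that $\textrm{Spec}(C)$ is the multiset union of $\textrm{Spec}(AB)$ and $\textrm{Spec}(BA)$. This is the only structural input needed about $C$; everything else is transferred from the two diagonal blocks.

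Next I would delete the zero eigenvalues and invoke the previous lemma, which gives $\textrm{Spec}(AB)\setminus\{0\}=\textrm{Spec}(BA)\setminus\{0\}$. Combining this with the union above yields that the set of nonzero eigenvalues of $C$ coincides with the set of nonzero eigenvalues of $AB$ (the multiplicities in $C$ being doubled, which is immaterial for what follows), establishing the first assertion. The zero eigenvalues, whose number differs between the two blocks when $r\neq s$, are discarded and play no role here.

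Finally, for $\lambda_1(C)=\lambda_1(AB)$ I would use that a block-diagonal matrix satisfies $\lambda_1(C)=\max\{\lambda_1(AB),\lambda_1(BA)\}$, and argue that this maximum is attained by $AB$. When the largest eigenvalue is positive it is a common nonzero eigenvalue of both blocks by the lemma, so $\lambda_1(AB)=\lambda_1(BA)$ and the claim is immediate. The delicate point, and the step I expect to be the main obstacle, is the case in which the top eigenvalue is zero or negative: here the two blocks may carry different numbers of zero eigenvalues, since $AB$ is $r\times r$ while $BA$ is $s\times s$, so $\max\{\lambda_1(AB),\lambda_1(BA)\}$ need not equal $\lambda_1(AB)$ unless one knows that the top eigenvalue is nonnegative. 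In the setting where this corollary is applied one takes $B=A^{\top}$, so that $AB=AA^{\top}$ and $BA=A^{\top}A$ are Gram matrices and hence positive semidefinite; this forces $\lambda_1(AB)=\lambda_1(BA)\ge 0$ and closes the argument.
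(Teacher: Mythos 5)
The paper never actually writes a proof of this corollary: it is stated as an immediate consequence of the preceding lemma, and your argument --- factor the characteristic polynomial of the block-diagonal $C$ as $\det(xI_r-AB)\,\det(xI_s-BA)$, then apply $\textrm{Spec}(AB)\setminus\{0\}=\textrm{Spec}(BA)\setminus\{0\}$ --- is precisely that intended argument. So for the first assertion you coincide with the paper, up to the multiplicity caveat you yourself note: with $\textrm{Spec}$ a multiset, each nonzero eigenvalue of $AB$ occurs twice in $C$, so the stated equality should be read as an equality of sets (or of multisets after halving multiplicities).

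Your discussion of the second assertion is the real content of your writeup, and your suspicion is justified: as stated, $\lambda_1(C)=\lambda_1(AB)$ is false for general $A,B$. Concretely, take $A=\begin{pmatrix}1 & 0\end{pmatrix}$ and $B=\begin{pmatrix}-1 \\ 0\end{pmatrix}$; then $AB=(-1)$, so $\lambda_1(AB)=-1$, while $BA=\left(\begin{smallmatrix}-1 & 0\\ 0 & 0\end{smallmatrix}\right)$ is singular, giving $\lambda_1(C)=\max\{\lambda_1(AB),\lambda_1(BA)\}=0\neq\lambda_1(AB)$. The failure can occur only when all eigenvalues of $AB$ are negative and $s>r$; it cannot occur once $\lambda_1(AB)\ge 0$, by exactly the argument you give (a positive top eigenvalue is a shared nonzero eigenvalue of both blocks, and a zero top eigenvalue dominates the common negative nonzero spectrum). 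In the paper this hypothesis is always satisfied, since the corollary is only ever invoked for $A(\dot G)^2=\left(\begin{smallmatrix}MM^t & 0\\ 0 & M^tM\end{smallmatrix}\right)$, whose blocks are positive semidefinite Gram matrices. So your final step is not a defect of your proof but a needed correction to the statement: the corollary should either assume $\lambda_1(AB)\ge 0$ (or $B=A^t$), or drop the last sentence.
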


Given a matrix $A \in \mathcal{M}_{r\times s}(\mathbb R)$, we say that $A\ge0$ if $a_{ij}\ge 0$ for every $1\le i\le r$ and $1\le j\le s$.

\begin{theorem}\label{thm: Perron}
    Let $A\in \mathcal{M}_n(\mathbb{R})$. If $A\ge 0$, then $\lambda_1(A)=\rho(A)$ and there exists a nonnegative vector $x$ such that $Ax=\lambda_1(A) x$. In addition, if $A>0$, then $\lambda_1(A)$ is a simple eigenvalue and there exists a positive vector $x$ such that $Ax=\lambda_1(A) x$.
\end{theorem}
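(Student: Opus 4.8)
The plan is to establish the existence of a nonnegative eigenvector via a fixed-point argument, then to identify its eigenvalue with the spectral radius through an entrywise comparison, and finally to treat the strictly positive case separately in order to obtain strict positivity and simplicity. Throughout, write $\Delta=\{x\in\mathbb{R}^n:\ x\ge 0,\ \sum_i x_i=1\}$ for the standard simplex, which is compact and convex, and let $J$ denote the all-ones matrix.

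First I would handle the strictly positive case $A>0$. For every $x\in\Delta$ we have $Ax>0$, so the map $T(x)=Ax/\|Ax\|_1$ is a well-defined continuous self-map of $\Delta$. By Brouwer's fixed-point theorem, $T$ admits a fixed point $x^{*}\in\Delta$, which satisfies $Ax^{*}=\lambda x^{*}$ with $\lambda=\|Ax^{*}\|_1>0$; moreover $x^{*}=\lambda^{-1}Ax^{*}>0$ is strictly positive. Next I would show $\lambda=\rho(A)$. For any eigenvalue $\mu$ of $A$ with eigenvector $y$, taking absolute values entrywise in $\mu y=Ay$ and using $A\ge 0$ yields $\abs{\mu}\,\abs{y}\le A\abs{y}$ componentwise. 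Pairing this inequality with a positive left eigenvector of $A$ for $\lambda$ (obtained by applying the same fixed-point argument to $A^{\top}$) forces $\abs{\mu}\le\lambda$, so $\rho(A)\le\lambda$; since $\lambda$ is itself an eigenvalue, we conclude $\rho(A)=\lambda=\lambda_1(A)$.

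For the general nonnegative case $A\ge 0$, I would argue by approximation. The matrices $A_{\varepsilon}=A+\varepsilon J$ are strictly positive for $\varepsilon>0$, so by the previous paragraph each has a Perron eigenvector $x_{\varepsilon}\in\Delta$ with eigenvalue $\rho(A_{\varepsilon})$. Using compactness of $\Delta$, I would extract a subsequence $x_{\varepsilon}\to x^{*}\in\Delta$ as $\varepsilon\to 0$, and then continuity of the spectral radius in the matrix entries gives $Ax^{*}=\rho(A)x^{*}$ with $x^{*}\ge 0$, which establishes the first assertion $\lambda_1(A)=\rho(A)$ together with the existence of a nonnegative eigenvector.

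Finally, for the simplicity claim when $A>0$, I would show that the Perron eigenvalue $\lambda$ is both geometrically and algebraically simple. Geometric simplicity follows by showing that any real eigenvector for $\lambda$ must be a scalar multiple of the positive vector $x^{*}$: a candidate eigenvector not proportional to $x^{*}$ could be combined with $x^{*}$ to produce a nonnegative, nonzero eigenvector with a zero entry, contradicting $Ax>0$. I expect the \emph{main obstacle} to be ruling out a nontrivial Jordan block, i.e.\ upgrading geometric simplicity to algebraic simplicity, since this does not follow from the fixed-point construction alone; the plan there is to use the strict positivity of $A$ and of the left and right Perron vectors to derive a contradiction from the existence of a generalized eigenvector. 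The conclusion that $\lambda_1(A)$ is a simple eigenvalue with a positive eigenvector then follows.
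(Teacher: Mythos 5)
The paper does not actually prove this statement: it is the classical Perron--Frobenius theorem, quoted in Section~\ref{sec: preliminaries} as a known preliminary, so there is no internal proof to compare yours against. Judged on its own merits, your route --- Brouwer's fixed-point theorem on the simplex for the strictly positive case, entrywise comparison against a positive left eigenvector to identify the fixed-point eigenvalue with $\rho(A)$, the perturbation $A+\varepsilon J$ plus compactness of $\Delta$ for the general nonnegative case, and a left--right eigenvector pairing for simplicity --- is a correct and standard self-contained proof. Two points need tightening before it is complete. First, applying the fixed-point argument to $A^{t}$ produces a positive left eigenvector $z$ for \emph{some} eigenvalue $\lambda'$, not automatically for your $\lambda$; you close this gap by pairing both against $x^{*}$: from $\lambda\, z^{t}x^{*}=z^{t}Ax^{*}=\lambda'\, z^{t}x^{*}$ and $z^{t}x^{*}>0$ you get $\lambda=\lambda'$, after which your comparison $|\mu|\, z^{t}|y|\le z^{t}A|y|=\lambda\, z^{t}|y|$ indeed yields $\rho(A)\le\lambda$. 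Second, the algebraic simplicity that you flag as the main obstacle is settled by exactly the pairing you propose, and it is one line once written out: since you have shown the geometric multiplicity of $\lambda$ is one, a nontrivial Jordan block would give a vector $v$ with $(A-\lambda I)v=x^{*}$, and then $0=(z^{t}A-\lambda z^{t})v=z^{t}(A-\lambda I)v=z^{t}x^{*}>0$, a contradiction. With those two lines inserted (and the routine remark that eigenvalues, hence $\rho$, depend continuously on the matrix entries, which your limiting step uses), the proposal is a complete and correct proof of the stated theorem.
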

Let $A$ be an $n\times n$ matrix partitioned as follows:
\[\left(\begin{matrix}
	A_{11}& \cdots& A_{1t}\\
	\vdots& \ddots& \vdots\\
	A_{t1}& \cdots& A_{tt}
\end{matrix}\right),
\]  
where $A_{ij} \in \mathcal{M}_{n_i\times n_j}(\mathbb{R})$ for every $1\le i, j\le t$, and $n_1+\cdots+n_t=n$. The \emph{quotient matrix} of $A$ under this partition is the matrix $B \in \mathcal{M}_{t\times t}(\mathbb{R})$ such that $b_{ij}=\frac 1 {n_i}\sum_{r=1}^{n_i}\sum_{s=1}^{n_j} a_{rs}^{ij}$, where $a_{rs}^{ij}$ stands for the $(r, s)$-entry of $A_{ij}$.
\begin{lemma}\cite{Lihuaetal2019}\label{lem: qotient matrix}
	Let $B$ be the equitable quotient matrix of $A$. Then $\textrm{Spec}(B)\subseteq \textrm{Spec}(A)$.
\end{lemma}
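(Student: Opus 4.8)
The plan is to realize $B$ as the matrix of $A$ acting on a distinguished $A$-invariant subspace, from which the spectral containment follows at the level of characteristic polynomials. Write the underlying partition as a decomposition of $\{1,\dots,n\}$ into blocks $V_1,\dots,V_t$ with $\abs{V_i}=n_i$, and introduce the \emph{characteristic matrix} $S\in\mathcal{M}_{n\times t}(\mathbb{R})$ whose $(v,i)$-entry equals $1$ if $v\in V_i$ and $0$ otherwise. Thus the columns $s_1,\dots,s_t$ of $S$ are the indicator vectors of the blocks; they are linearly independent, so $S$ has full column rank, and $S^{T}S=\textrm{diag}(n_1,\dots,n_t)$ is invertible.

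The key identity is $AS=SB$. Fix $v\in V_i$ and a block index $j$. Then the $(v,j)$-entry of $AS$ equals $\sum_{w\in V_j}a_{vw}$, namely the sum of the entries of row $v$ of $A$ lying in the columns indexed by $V_j$, i.e.\ the $v$-th row sum of the block $A_{ij}$. On the other side, the $(v,j)$-entry of $SB$ equals $\sum_{k}S_{vk}b_{kj}=b_{ij}$. Here is exactly where the \emph{equitable} hypothesis enters: equitability means that every row of the block $A_{ij}$ has the same sum, so this common value coincides with the average $b_{ij}=\frac{1}{n_i}\sum_{r=1}^{n_i}\sum_{s=1}^{n_j}a^{ij}_{rs}$ appearing in the definition. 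Hence the $(v,j)$-entries of $AS$ and $SB$ agree for all $v$ and $j$, and therefore $AS=SB$.

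From $AS=SB$ I read off that the column space $\mathcal{U}=\textrm{col}(S)$ is $A$-invariant, since $A\,\textrm{col}(S)=\textrm{col}(AS)=\textrm{col}(SB)\subseteq\textrm{col}(S)$. Moreover, in the basis $s_1,\dots,s_t$ of $\mathcal{U}$ one has $As_j=\sum_i b_{ij}s_i$, so the matrix representing $A|_{\mathcal{U}}$ in this basis is precisely $B$. Completing $s_1,\dots,s_t$ to a basis of $\mathbb{R}^{n}$ puts $A$ in block upper-triangular form $\left(\begin{smallmatrix}B&\ast\\0&C\end{smallmatrix}\right)$, whence $\det(xI_n-A)=\det(xI_t-B)\det(xI_{n-t}-C)$. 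Therefore the characteristic polynomial of $B$ divides that of $A$, which is exactly the multiset inclusion $\textrm{Spec}(B)\subseteq\textrm{Spec}(A)$.

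I expect the only real subtlety to be the distinction between set and multiset containment. A quick eigenvector argument---if $By=\mu y$ with $y\neq0$, then $A(Sy)=SBy=\mu(Sy)$ and $Sy\neq0$ because $S$ is injective---already yields $\textrm{Spec}(B)\subseteq\textrm{Spec}(A)$ as sets and even matches geometric multiplicities, but it does not by itself certify the algebraic multiplicities. The invariant-subspace/block-triangular formulation above is what upgrades the statement to the multiset inclusion that $\textrm{Spec}(\cdot)$ denotes here, so that is the step I would be careful to phrase correctly.
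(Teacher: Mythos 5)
Your proof is correct, but there is nothing in the paper to compare it against: the lemma is stated with a citation to \cite{Lihuaetal2019} and no proof is given in the paper itself, so your argument serves as a self-contained substitute for that reference. What you wrote is the standard proof of this classical fact, and it is carried out correctly: the identity $AS=SB$ for the characteristic matrix $S$ of the partition is exactly equivalent to equitability (each block $A_{ij}$ has constant row sums, the common value being the average $b_{ij}$ of the paper's definition); this makes $\mathrm{col}(S)$ an $A$-invariant subspace on which $A$ acts, in the basis of indicator vectors, by the matrix $B$; and completing that basis yields the block upper-triangular form giving $\det(xI_n-A)=\det(xI_t-B)\det(xI_{n-t}-C)$, hence divisibility of characteristic polynomials. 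Your closing remark pinpoints the one genuine subtlety: since $\textrm{Spec}$ in this paper denotes a multiset (eigenvalues counted with algebraic multiplicity), the quick eigenvector argument ($By=\mu y$ implies $A(Sy)=\mu(Sy)$ with $Sy\neq 0$, since $S$ is injective) only certifies set containment and geometric multiplicities, and the block-triangular factorization is indeed what upgrades this to the multiset inclusion $\textrm{Spec}(B)\subseteq\textrm{Spec}(A)$ that the lemma asserts and that the paper actually uses (e.g., in Lemma \ref{lem: complete bipartite without an edge and only one negative edge}, where eigenvalues of quotient matrices are compared against spectral radii).
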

A graph $G$ is \emph{bipartite} if $V(G)$ can be partitioned into two independent sets $X$ and $Y$; that is, sets whose vertices are pairwise nonadjacent. The pair $\{X,\,Y\}$ is called a \emph{bipartition} of $G$. A \emph{complete bipartite} graph is a bipartite graph $G$ with a bipartition $\{X,\,Y\}$ such that $uv\in E(G)$ for each $u\in X$ and $v\in Y$. We use $K_{r,s}$ to denote the complete bipartite graph with a bipartition set with $r$ and $s$ vertices, respectively. A \emph{complete graph} is a graph in which every pair of vertices is adjacent. We use $K_n$ to denote the complete graph on $n$ vertices. Given two graphs $G$ and $H$, we denote by $G+H$ the disjoint union of $G$ and $H$.
\begin{lemma}\label{lem: compelte bipartite}
	Let $G$ be a connected bipartite graph. Then, $G$ is a complete bipartite graph if and only if $G$ does not contain $K_2+K_1$ as induced subgraph.
\end{lemma}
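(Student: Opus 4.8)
The plan is to prove both implications directly, treating the forward direction by a short case analysis and the converse by contraposition. Throughout I fix a bipartition $\{X,Y\}$ of $G$; since $G$ is connected this bipartition is unique up to interchanging the two parts, so that ``complete bipartite'' can be read unambiguously as ``every $u\in X$ is adjacent to every $v\in Y$''.

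For the forward implication, I assume $G$ is complete bipartite and take any three vertices, splitting into cases according to how many of them lie in $X$. If all three lie in one part, they are pairwise nonadjacent and induce $3K_1$; if two lie in $X$ and one in $Y$ (or symmetrically), the two vertices in the same part are nonadjacent while each is joined to the third, so the three induce a path on three vertices, which has two edges. In every case the number of edges among the chosen vertices is $0$ or $2$, never exactly $1$, so $G$ contains no induced copy of $K_2+K_1$.

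For the converse I argue by contraposition: assuming $G$ is not complete bipartite, I exhibit an induced $K_2+K_1$. Since within-part pairs are always nonadjacent, the only way completeness can fail is that there is a cross pair $u\in X$, $v\in Y$ with $uv\notin E(G)$. Because $G$ is connected and has at least two vertices, $v$ has a neighbor $w$, which necessarily lies in $X$ and satisfies $w\neq u$ (as $uv\notin E(G)$). Among $\{u,v,w\}$ the only edge is then $vw$: the pair $uw$ lies inside $X$ and hence is a non-edge, and $uv$ is the assumed non-edge. Thus $\{u,v,w\}$ induces exactly $K_2+K_1$, which establishes the contrapositive.

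The argument is essentially routine; the only point requiring care is the converse, where I must ensure that the vertex $v$ actually has a neighbor and that this neighbor is distinct from $u$. Both facts are guaranteed by connectivity together with the chosen non-edge $uv$, so there is no genuine obstacle, but it is worth recording explicitly that connectivity enters precisely to rule out isolated vertices.
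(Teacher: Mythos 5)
Your proof is correct. The paper states this lemma in its preliminaries without any proof (it is treated as a routine known fact), so there is no argument of the authors to compare against; your two-direction verification---the edge-count case analysis on three vertices for the forward implication, and the contrapositive producing an induced $K_2+K_1$ from a cross non-edge $uv$ together with a neighbor $w$ of $v$ forced by connectivity---is exactly the standard elementary argument and is complete.
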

Let $\dot G=(G,\sigma)$ be a signed graph, and let $\theta: V(G)\to \{-1,1\}$ be a function. We define $\dot G^\theta=(G,\sigma^\theta)$ as the signed graph whose signature $\sigma^\theta$ is defined in terms of $\theta$ as follows: $\sigma^\theta(uv)=\theta(u)\sigma(uv)\theta(v)$, for each edge $uv\in E(G)$. Note that $\dot{G}^\theta=\dot{G}^U$ with $U=\{v \in V(G): \theta(v)=1\}$. Two signed graphs $\dot{G}_1=(G,\sigma_1)$ and $\dot{G}_2=(G,\sigma_2)$ are \emph{switching equivalents} if there exists a function $\theta: V(G)\to \{-1,1\}$ such that 
$\sigma_2(uv)=\sigma_1^\theta(uv)$ for each $uv\in E(G)$. Notice that switching equivalent graphs have the same spectrum. The following lemma is well known \cite{Zaslavsky}.
\begin{lemma}\label{lem: switching equivalent} 
	A signed graph $\dot G$ is balanced if and only if $\dot G$ is switching equivalent to its underlying graph.
\end{lemma}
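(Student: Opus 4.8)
The plan is to build the entire argument around a single quantity: the \emph{sign of a cycle} $C$, namely the product $\prod_{e\in C}\sigma(e)$ of the signs of its edges. First I would record that $\dot G$ is balanced precisely when every cycle has sign $+1$, since a cycle contains an even number of negative edges if and only if the product of its edge signs is positive. The key computation is that this cycle sign is unchanged by switching: for any $\theta:V(G)\to\{-1,1\}$ and any cycle $C=v_1v_2\cdots v_kv_1$,
\begin{equation*}
\prod_{i=1}^{k}\sigma^\theta(v_iv_{i+1})=\prod_{i=1}^{k}\theta(v_i)\sigma(v_iv_{i+1})\theta(v_{i+1})=\Big(\prod_{i=1}^{k}\theta(v_i)^2\Big)\prod_{i=1}^{k}\sigma(v_iv_{i+1})=\prod_{i=1}^{k}\sigma(v_iv_{i+1}),
\end{equation*}
because every vertex of the cycle contributes its $\theta$-value exactly twice (indices modulo $k$, with $v_{k+1}=v_1$). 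Hence all cycle signs are switching invariants.

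The direction \emph{switching equivalent to the underlying graph $\Rightarrow$ balanced} is then immediate: the underlying graph is the all-positive signed graph, so each of its cycles has sign $+1$; since these signs are preserved under switching, every cycle of $\dot G$ also has sign $+1$, and therefore $\dot G$ is balanced.

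For the converse I would assume $\dot G$ is balanced and construct an explicit switching function $\theta$ with $\sigma^\theta\equiv+1$, which, through the identification $\dot G^\theta=\dot G^U$ with $U=\{v:\theta(v)=1\}$ noted earlier, exhibits $\dot G$ as switching equivalent to its underlying graph. It suffices to treat each connected component separately, so I assume $G$ is connected, fix a spanning tree $T$ rooted at a vertex $r$, and set $\theta(r)=1$ together with $\theta(v)=\prod_{e\in P_v}\sigma(e)$, where $P_v$ is the unique path in $T$ from $r$ to $v$. By construction every tree edge $uv$ satisfies $\theta(u)\theta(v)=\sigma(uv)$, so $\sigma^\theta(uv)=\theta(u)\sigma(uv)\theta(v)=+1$. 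For a non-tree edge $uv$, I consider the fundamental cycle $C$ formed by $uv$ together with the tree path from $u$ to $v$; since the shared initial segment of $P_u$ and $P_v$ cancels (its signs are squared), the product of signs along that tree path equals $\theta(u)\theta(v)$, so the sign of $C$ equals $\sigma(uv)\theta(u)\theta(v)$. Balance forces this product to be $+1$, and as $\theta(u)\theta(v)\in\{-1,1\}$ I conclude $\sigma(uv)=\theta(u)\theta(v)$, whence $\sigma^\theta(uv)=+1$ for the non-tree edges as well. Thus $\dot G^\theta$ is all-positive.

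The one point demanding care---the main obstacle---is the consistency check on the non-tree edges: the values of $\theta$ are forced by the tree, and one must verify that this forced assignment simultaneously turns every remaining edge positive. This is exactly where the balance hypothesis enters, through the fact that each fundamental cycle has sign $+1$; the spanning-tree bookkeeping, namely the cancellation of the shared initial segment of $P_u$ and $P_v$, is routine once the cycle-sign invariant is established.
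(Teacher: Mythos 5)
Your proof is correct and complete. There is, however, nothing in the paper to compare it against: the authors state this lemma as a well-known result and simply cite Zaslavsky for it, giving no proof of their own. Your argument is the standard one from the literature: the product of edge signs around a cycle is invariant under switching (each vertex of the cycle contributes $\theta(v)^2=1$), which yields the direction from switching equivalence to balance; for the converse, the potential function $\theta$ defined by sign-products along the paths of a spanning tree makes every tree edge positive, and balance of each fundamental cycle forces the non-tree edges to become positive under the same $\theta$. The points you flag as delicate are handled correctly: the cancellation of the common initial segment of $P_u$ and $P_v$ is valid because all signs lie in $\{-1,1\}$, and the reduction to connected components is harmless since every cycle lies entirely within one component (and $\theta$ may be chosen independently on each). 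One small definitional remark: the paper's abstract describes balance via an even number of \emph{positive} edges per cycle, but the body of the paper---and your proof---use the standard definition requiring an even number of \emph{negative} edges; the lemma refers to the latter, so your reading is the right one.
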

\begin{lemma}\cite[Lemma 1]{Stanic2018}\label{lem: spectral radius switching equivalent class}  
	Let $\mathcal E$ denote a class of switching equivalent signed graphs and let $\lambda$ be an eigenvalue belonging to the common spectrum. Then $\mathcal E$ contains a signed graph for which the eigenvector that corresponds to $\lambda$ may be chosen in such a way that all its non-zero coordinates are of the same sign.
\end{lemma}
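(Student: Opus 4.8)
The plan is to exploit the fact that switching acts on the adjacency matrix by conjugation with a diagonal $\pm 1$ matrix, and that this conjugation moves eigenvectors in a completely controllable way. First I would fix a representative $\dot G = (G,\sigma)\in\mathcal E$ together with an eigenvector $x$ of $A(\dot G)$ associated with $\lambda$, so that $A(\dot G)\,x=\lambda x$, and write $V(G)=\{v_1,\dots,v_n\}$ with coordinates $x_1,\dots,x_n$. For a function $\theta\colon V(G)\to\{-1,1\}$ let $D_\theta=\operatorname{diag}(\theta(v_1),\dots,\theta(v_n))$. A direct entrywise check against the definition $\sigma^\theta(uv)=\theta(u)\sigma(uv)\theta(v)$ shows that $A(\dot G^\theta)=D_\theta\,A(\dot G)\,D_\theta$, since the $(u,v)$-entry of the right-hand side is $\theta(u)\sigma(uv)a_{uv}\theta(v)=\sigma^\theta(uv)a_{uv}$. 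Because $D_\theta$ is a symmetric involution, $D_\theta=D_\theta^{-1}=D_\theta^{\top}$, this is an orthogonal similarity; it re-proves that switching equivalent graphs share their spectrum and, more importantly, it tells us how eigenvectors transform: from $A(\dot G)\,x=\lambda x$ we get $A(\dot G^\theta)(D_\theta x)=D_\theta A(\dot G)D_\theta D_\theta x=D_\theta A(\dot G)x=\lambda\,(D_\theta x)$, so $D_\theta x$ is an eigenvector of $A(\dot G^\theta)$ for $\lambda$.

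With this transformation rule in hand, the conclusion is obtained by choosing $\theta$ so as to cancel the signs of the coordinates of $x$. Concretely I would set $\theta(v_i)=1$ when $x_i\ge 0$ and $\theta(v_i)=-1$ when $x_i<0$. Then the $i$-th coordinate of $D_\theta x$ equals $\theta(v_i)x_i=\abs{x_i}\ge 0$, so the eigenvector $D_\theta x$ of the switching equivalent graph $\dot G^\theta\in\mathcal E$ has all of its coordinates nonnegative; in particular every nonzero coordinate is positive, hence all nonzero coordinates share the same sign. Taking $\dot G^\theta$ as the required member of $\mathcal E$ finishes the argument, and by Lemma~\ref{lem: switching equivalent} this member indeed lies in the prescribed switching class.

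The only genuinely substantive step is the identity $A(\dot G^\theta)=D_\theta A(\dot G)D_\theta$; once it is verified, the remainder is a one-line choice of $\theta$. I therefore do not expect a real obstacle here, since the lemma is essentially the observation that any sign pattern prescribed on the vertices can be absorbed into a switching. The single point requiring a little care is the treatment of vanishing coordinates $x_i=0$: assigning them $\theta(v_i)=1$ (or any value) is harmless, because it leaves the zero coordinate zero and hence does not affect the sign condition, which is asserted only for the nonzero coordinates.
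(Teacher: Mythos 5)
Your proof is correct. Note that the paper itself gives no proof of this lemma---it is quoted directly from the cited reference \cite[Lemma 1]{Stanic2018}---and your argument (switching is conjugation by the diagonal involution $D_\theta$, so $D_\theta x$ is a $\lambda$-eigenvector of $A(\dot G^\theta)$, and choosing $\theta$ to absorb the signs of $x$ makes all coordinates nonnegative) is essentially the standard proof given in that reference, so there is nothing to compare against within the paper. One cosmetic slip: membership of $\dot G^\theta$ in $\mathcal E$ does not follow from Lemma~\ref{lem: switching equivalent} (which characterizes balanced graphs); it follows directly from the definition of switching, since $\dot G^\theta=\dot G^{U}$ with $U=\{v\in V(G):\theta(v)=1\}$, as noted in the paper's preliminaries.
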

%
The following lemma we will be used several times to compare eigenvalues of two graphs.
\begin{lemma}\label{lem: comparing polynomials}
	Let $\dot F$ and $\dot G$ be two signed graphs, such that $P_{\dot F}(x)>P_{\dot G}(x)$ for every $x\ge\lambda_1(\dot G)$, where $P_{\dot F}$ denotes the characteristic polynomial of $A(\dot{F})$. Then, $\lambda_1(\dot F)<\lambda_1(\dot G)$.
\end{lemma}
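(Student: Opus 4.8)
The plan is to prove the contrapositive-style comparison by examining the characteristic polynomials directly. Recall that $\lambda_1(\dot G)$ is the largest eigenvalue, so for every $x \geq \lambda_1(\dot G)$ we know $P_{\dot G}(x) \geq 0$ (since $P_{\dot G}$ is monic and $\lambda_1(\dot G)$ is its largest real root, the polynomial is nonnegative to the right of its largest root). The hypothesis gives $P_{\dot F}(x) > P_{\dot G}(x) \geq 0$ for all such $x$, so in particular $P_{\dot F}(x) > 0$ for every $x \geq \lambda_1(\dot G)$. First I would make precise the standard fact that a monic polynomial of degree $n$ is strictly positive for all real arguments strictly greater than its largest real root, and nonnegative at the root itself; this follows because $P(x) = \prod_i (x - \mu_i)$ and every factor with a real root $\mu_i \leq \lambda_1(\dot G) \leq x$ is nonnegative, while complex roots contribute nonnegative conjugate-pair factors $(x-a)^2 + b^2$.

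The key step is then to argue that $\lambda_1(\dot F)$, the largest real root of $P_{\dot F}$, must lie strictly to the left of $\lambda_1(\dot G)$. Suppose for contradiction that $\lambda_1(\dot F) \geq \lambda_1(\dot G)$. Then $x = \lambda_1(\dot F)$ satisfies $x \geq \lambda_1(\dot G)$, so by the hypothesis $P_{\dot F}(\lambda_1(\dot F)) > P_{\dot G}(\lambda_1(\dot F)) \geq 0$. But $\lambda_1(\dot F)$ is by definition a root of $P_{\dot F}$, forcing $P_{\dot F}(\lambda_1(\dot F)) = 0$, a contradiction. Hence $\lambda_1(\dot F) < \lambda_1(\dot G)$, which is exactly the desired conclusion.

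The main obstacle, though a mild one, is ensuring the sign conventions on the characteristic polynomial are consistent: one must fix $P_{\dot F}(x) = \det(xI - A(\dot F))$ so that it is monic and the ``nonnegative to the right of the largest root'' property holds. If instead the paper uses $\det(A - xI)$, the inequality directions would need to be adjusted accordingly, but since the statement is phrased with $P_{\dot F}(x) > P_{\dot G}(x)$ implying $\lambda_1(\dot F) < \lambda_1(\dot G)$, the monic convention is the natural one and the argument goes through cleanly. I expect no genuine difficulty beyond this bookkeeping, since the entire proof rests on the elementary observation that a monic characteristic polynomial cannot vanish at a point where it is known to be strictly positive.
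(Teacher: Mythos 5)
Your proof is correct, and since the paper states this lemma without proof (treating it as a standard fact), your argument is precisely the elementary one the authors implicitly rely on: $P_{\dot G}$ is monic with all roots real (as $A(\dot G)$ is symmetric), hence nonnegative on $[\lambda_1(\dot G),\infty)$, so the hypothesis forces $P_{\dot F}>0$ there and $\lambda_1(\dot F)$ cannot lie in that interval. Your aside about complex conjugate-pair factors is harmless but unnecessary here, since adjacency matrices of signed graphs are real symmetric and thus have only real eigenvalues.
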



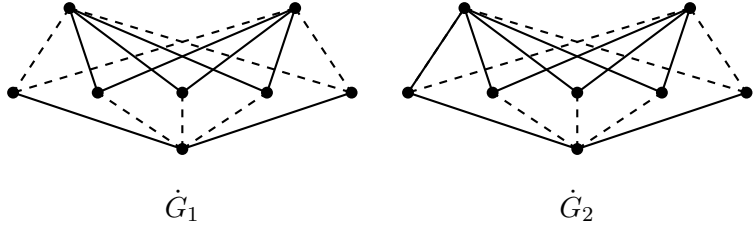
\begin{figure}
	\begin{center}
	
	\begin{tikzpicture}[scale=0.75]
	
	\coordinate (v1) at (-2,1.5);
	\coordinate (u1) at (0,-1);
	\coordinate (v2) at (2,1.5);
	
	\coordinate (u2) at (-3,0); 
	\coordinate (v3) at (-1.5,0);
	\coordinate (v4) at (0,0);
	\coordinate (v5) at (1.5,0);
	\coordinate (u3) at (3,0);
		
	\foreach \punto in {1,...,3}
        \fill (u\punto) circle(3pt);
	
	\foreach \punto in {1,...,5}
        \fill (v\punto) circle(3pt);
	
	\foreach \punto in {1,...,2}
        \foreach \qunto in {2,...,3}
            \draw[thick,dashed] (v\punto)--(u\qunto);
            
    \foreach \punto in {1,...,1}
        \foreach \qunto in {3,...,5}
            \draw[thick, dashed] (u\punto)--(v\qunto);
	
	\foreach \punto in {1,...,2}
        \foreach \qunto in {3,...,5}
            \draw[thick] (v\punto)--(v\qunto);
            
    \foreach \punto in {1,...,1}
        \foreach \qunto in {2,...,3}
            \draw[thick] (u\punto)--(u\qunto);
		
	\node[] at (0,-2) {$\dot{G}_1$};
	
    
    \coordinate (v1) at (5,1.5);
	\coordinate (u1) at (7,-1);
	\coordinate (v2) at (9,1.5);
	
	\coordinate (u2) at (4,0); 
	\coordinate (v3) at (5.5,0);
	\coordinate (v4) at (7,0);
	\coordinate (v5) at (8.5,0);
	\coordinate (u3) at (10,0);
		
	\foreach \punto in {1,...,3}
        \fill (u\punto) circle(3pt);
	
	\foreach \punto in {1,...,5}
        \fill (v\punto) circle(3pt);
	
	\foreach \punto in {1,...,2}
        \foreach \qunto in {2,...,3}
            \draw[thick,dashed] (v\punto)--(u\qunto);
    
    \draw[thick] (v1)--(u2);
    
    \foreach \punto in {1,...,1}
        \foreach \qunto in {3,...,5}
            \draw[thick, dashed] (u\punto)--(v\qunto);
	
	\foreach \punto in {1,...,2}
        \foreach \qunto in {3,...,5}
            \draw[thick] (v\punto)--(v\qunto);
            
    \foreach \punto in {1,...,1}
        \foreach \qunto in {2,...,3}
            \draw[thick] (u\punto)--(u\qunto);

	\node[] at (7,-2) {$\dot{G}_2$};
	\end{tikzpicture}
	
	\end{center}
	
	\caption{$\dot{G}_1$ and $\dot{G}_2$ are balanced and unbalanced signed complete bipartite graphs, respectively}\label{fig: Balanceado y No balanceado}
\end{figure}

\section{Balanced signed complete bipartite graphs}\label{sec: Balanced signed complete bipartite graphs}
In this section, we characterize the balanced signed complete bipartite graphs (see Fig. \ref{fig: Balanceado y No balanceado}).
\begin{theorem}\label{Thm: balanced signed complete bipartite}
	Let $\dot{G}=(K_{r,s},H^-)$ be a signed complete bipartite graph with $r\le s$, and let $\{X,Y\}$ be a bipartition with $|X|=r$ and $|Y|=s$. Then, $\dot{G}$ is balanced if and only if one of the following conditions holds:
	\begin{enumerate}
		\item\label{item1} $H$ is a complete bipartite graph with $r<|V(H)|<r+s$, and a bipartition $\{X_H,Y_H\}$ such that $X_H\subseteq X$, $Y_H\subseteq Y$, and $|X_H|=r$ or $|Y_H|=s$.
		\item\label{item2}  $H$ either is a complete bipartite graph or the disjoint union of two complete bipartite graphs, with $|V(H)|=r+s$.
	\end{enumerate}
\end{theorem}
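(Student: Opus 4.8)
The plan is to reduce balancedness to the existence of a switching function and then read off the structure of $H$. By Lemma~\ref{lem: switching equivalent}, $\dot G$ is balanced if and only if it is switching equivalent to its all-positive underlying graph $K_{r,s}$. In the $\theta$-formulation of the preliminaries this means there is a map $\theta\colon X\cup Y\to\{-1,1\}$ with $\theta(u)\sigma(uv)\theta(v)=1$ for every edge $uv$, equivalently $\sigma(uv)=\theta(u)\theta(v)$. First I would split each part of the bipartition according to the sign of $\theta$, writing $X=X^+\cup X^-$ and $Y=Y^+\cup Y^-$ with $X^{\pm}=\{u\in X:\theta(u)=\pm1\}$ and likewise for $Y$. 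Since $K_{r,s}$ is complete bipartite, every $u\in X$ is joined to every $v\in Y$, and that edge is negative precisely when $\theta(u)\theta(v)=-1$, i.e.\ when $u$ and $v$ receive opposite signs. Hence the negative edges are exactly those of $[X^+,Y^-]\cup[X^-,Y^+]$, so that $H$ is the disjoint union $K_{|X^+|,|Y^-|}+K_{|X^-|,|Y^+|}$ of two complete bipartite graphs, under the convention that a factor is empty whenever one of its parts is empty.

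The forward implication then becomes a bookkeeping argument on the sizes $a=|X^+|$, $b=|X^-|$, $c=|Y^+|$, $d=|Y^-|$, which satisfy $a+b=r$ and $c+d=s$. Using that $H$ is nonempty and, by the standing convention, has no isolated vertices, I would split into cases. If both factors are nonempty, then every vertex of $X\cup Y$ is incident with a negative edge, so $|V(H)|=(a+b)+(c+d)=r+s$ and $H$ is the disjoint union of two complete bipartite graphs, which is the second alternative of item~\ref{item2}. If exactly one factor is nonempty, say $K_{a,d}$, then $b=0$ (forcing $a=r$) or $c=0$ (forcing $d=s$), so $H$ is a single complete bipartite graph whose bipartition $\{X_H,Y_H\}$ satisfies $X_H\subseteq X$, $Y_H\subseteq Y$ and $|X_H|=r$ or $|Y_H|=s$; here $|V(H)|=r+s$ exactly when it covers both sides, whence $H=K_{r,s}$ (first alternative of item~\ref{item2}), and otherwise $r<|V(H)|<r+s$, which is item~\ref{item1}. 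I expect this case split to be the main, though routine, obstacle: one must verify that the inequality $r<|V(H)|<r+s$ is exactly equivalent to ``$H$ is a complete bipartite graph covering one full side of the bipartition but not both'', and check that the degenerate sign classes are consistent with the no-isolated-vertex convention.

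For the converse I would exhibit, in each case, a switching function witnessing balancedness via Lemma~\ref{lem: switching equivalent}. In item~\ref{item1} with $|X_H|=r$, put $\theta\equiv+1$ on $X$, $\theta\equiv-1$ on $Y_H$ and $\theta\equiv+1$ on $Y\setminus Y_H$; then $\theta(u)\theta(v)=-1$ holds exactly on the edges of $H=[X,Y_H]$, so the switched signature is identically $+1$, and the subcase $|Y_H|=s$ is symmetric. In item~\ref{item2}, for $H=K_{r,s}$ take $\theta\equiv+1$ on $X$ and $\theta\equiv-1$ on $Y$; and for $H$ the disjoint union of two complete bipartite graphs with parts $X_1,Y_1$ and $X_2,Y_2$ (so $X_1\cup X_2=X$ and $Y_1\cup Y_2=Y$), put $\theta\equiv+1$ on $X_1\cup Y_2$ and $\theta\equiv-1$ on $X_2\cup Y_1$. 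A direct check that $\theta(u)\theta(v)=-1$ precisely on $E(H)$ then shows the switched signature is identically $+1$, so $\dot G$ is switching equivalent to $K_{r,s}$ and hence balanced, completing the equivalence.
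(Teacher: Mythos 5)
Your proposal is correct, and its ``only if'' direction is genuinely different from the paper's. The paper proves the forward implication by contradiction at the cycle level: it repeatedly exhibits a $C_4$ (or an even cycle) with exactly one negative edge, invoking Lemma~\ref{lem: compelte bipartite} (connected bipartite graphs with no induced $K_2+K_1$ are complete bipartite) to force the structure of $H$, and it splits into the cases $|V(H)|\le r$, $r<|V(H)|<r+s$, and $|V(H)|=r+s$. You instead push Lemma~\ref{lem: switching equivalent} through both directions: writing $\sigma(uv)=\theta(u)\theta(v)$ and partitioning each side into sign classes $X^{\pm}$, $Y^{\pm}$ immediately yields $H=K_{|X^+|,|Y^-|}+K_{|X^-|,|Y^+|}$, after which items~\ref{item1} and~\ref{item2} are read off by bookkeeping on which classes are empty (your flagged verification that a single nonempty factor must cover a full side, and that this gives exactly $r<|V(H)|<r+s$ unless $H=K_{r,s}$, does go through, since an empty factor forces $|X^+|=r$ or $|Y^-|=s$). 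Your route is shorter and more uniform: it needs no contradiction arguments, no auxiliary characterization of complete bipartite graphs, and no special treatment of $r=1$, making transparent that the theorem is just the sign-class decomposition of a switching function on a complete bipartite underlying graph. The paper's route is heavier but more local: it identifies the concrete obstruction to balance (an unbalanced $4$-cycle), which is the kind of structural information that can be reused in later perturbation arguments. Your ``if'' direction coincides with the paper's, exhibiting the same explicit switching functions.
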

\begin{proof}
	Notice that if $r=1$, the result trivially holds. Therefore, we can assume that $2\le r$. 
	
	Let us prove the ``only if part''. Assume that $\dot{G}$ is a balanced signed graph. Suppose, towards a contradiction, that $|V(H)|\le r$. Hence,  $X\setminus X_H\neq\emptyset$ and $Y\setminus Y_H\neq\emptyset$. Consequently, there exists a four-vertex set $\{u,v,x,y\}$, where $uv\in E(H)$ (is a negative edge), $x\in X\setminus X_H$ and  $y\in Y\setminus Y_H$, inducing a $C_4$ with one negative edge, contradicting the balancedness of $\dot{G}$. Thus, $r<|V(H)|$. Notice that, we have already proved that if $\dot{G}$ is balanced, then either $X_H=X$ or $Y_H=Y$.  
	
	First, assume that $|V(H)|<r+s$. Suppose, towards a contradiction, that $H$ is not a complete bipartite graph. By Lemma \ref{lem: compelte bipartite}, there exists a three-vertex set ${a,b,c}$ that induces the graph $K_2+K_1$. Suppose, without losing generality, that $ab\in E(H)$, $a,c\in X_H$ and $b\in Y_H$. If $c$ and $b$ are in the same connected component of $H$ and thus $a$ is also in the same connected component of $H$ than $c$, there exists a path $P_{cb}$ of negative edges linking $c$ and $b$. Hence, the edges of $P_{cb}$ plus the positive edge $bc$ is a cycle $C$ with exactly one positive edge. Since any cycle in $K_{r,s}$ has even length, $C$ has an odd number of negative edges, contradicting the balancedness of $\dot{G}$. Thus $\{a,b\}$ and $c$ are in distinct connected component of $H$. 
	
	Suppose that $X_H=X$. Since $|V(H)|<r+s$ there exists a vertex $d\in Y$ such that $ad,\,cd\notin E(H)$. Consequently, $\{a,b,c,d\}$ induces a $C_4$ with exactly one negative edge, contradicting the balanceness of $\dot{G}$. Suppose now $Y_H=Y$. Since $H$ has no isolated vertices, there exists a vertex $e\in Y_H$ such that $ce\in E(H)$. Since $a$ and $c$ are in distinct connected components of $H$, $a$ and $e$ are also in distinct connected components of $H$. Since $|V(H)|<r+s$, there exits a vertex $f\in X$ such that $fb,\,fe\notin E(H)$. Consequently, $\{b,c,e,f\}$ induces a $C_4$ with exactly one negative edge, contradicting the balanceness of $\dot{G}$. In all cases we have reached a contradiction coming from assuming that $H$ has $K_2+K_1$ as induced subgraph. Therefore, $H$ is a complete bipartite graph. 
	
	Assume now that $|V(H)|=r+s$. Analogously to the case $|V(H)<r+s$ we can prove that each connected component of $H$ is a complete bipartite graph. Suppose, towards a contradiction, that $H$ has at least three connected components. Let $H_1,\,H_2$ and $H_3$, three connected components of $H$. Hence, there exists a negative edge $rs\in E(H_1)$, a vertex $t\in V(H_2)\cap X$, and a vertex $u\in V(H_3)\cap Y$. Consequently, $\{r,s,t,u\}$ induces a $C_4$ with exactly one negative edge, contradicting the balanceness of $\dot{G}$. The contradiction arose from assuming that $H$ has more than three connected components. Therefore, $H$ is the disjoint union of at most two bipartite graphs. 
	
	Let us prove the ``if part''. Assume that $\dot{G}$ is a singed complete bipartite graph as described in \ref{item1}. Assume that $|X_H|=r$. Consider the function $\theta: V(G)\to\{-1,1\}$ defined as follows,
	\[\theta(v)=\begin{cases}
		1& \text{if } v\in X\cup (Y\setminus Y_H),\\
		-1& \text{if } v\in  Y_H.
	\end{cases}
	\]
	Thus, $\dot{G}^\theta$ has all its edges positive. By Lemma \ref{lem: switching equivalent}, it follows that $\dot{G}$ is a balanced signed graph. We leave the case $|Y_H|=s$ for the reader, as it is symmetric to the one we have considered. Now, assume that $\dot{G}$ is a complete signed bipartite graph as in \ref{item2}. Notice that if all its edges are negative, clearly $\dot{G}$ is a balanced signed graph. Suppose that $H$ is the disjoint union of two complete bipartite graphs $H_1$ and $H_2$, with bipartitions $\{X_1,Y_1\}$ and $\{X_2,Y_2\}$, respectively, where $X_1\cup X_2=X$ and $Y_1\cup Y_2=Y$. Consider the function $\theta: V(G)\to\{-1,1\}$ defined as follows,
	\[\theta(v)=\begin{cases}
		1& \text{if } v\in X_1\cup Y_2,\\
		-1& \text{if } v\in  X_2\cup Y_1.
	\end{cases}
	\]
	Thus $\dot{G}^\theta$ has all its edges positive. By Lemma \ref{lem: switching equivalent}, it follows that $\dot{G}$ is a balanced signed graph.
\end{proof}
\section{Negative edges inducing a tree}\label{sec: Negative edges inducing a tree}
In this section, we will consider a signed complete bipartite graph $\dot{G}=(K_{r,s},H_m^-)$ with a bipartition $\{X,Y\}$ such that $X=\{v_1,\ldots,v_r\}$, $Y=\{w_1,\ldots,w_s\}$, and $r\le s$. Here, $H_m$ denotes the subgraph induced by the $m$ negative edges of $\dot{G}$. A \emph{chain graph} is a bipartite graph with a bipartition $\{U,\, W\}$ such that for any $u,z\in U$, either $N(u)\subseteq N(z)$ or $N(z)\subseteq N(u)$ (see Fig. \ref{fig: Chain}).

\begin{figure}
\begin{center}
\begin{tikzpicture}[scale=1]
    
    \coordinate (v1) at (0, 1);
    \coordinate (v2) at (2, 1);
    \coordinate (v3) at (4, 1);
    \coordinate (v4) at (6, 1);
    \coordinate (v5) at (8, 1);
   
    \coordinate (w1) at (1, -1);
    \coordinate (w2) at (3, -1);
    \coordinate (w3) at (5, -1);
    \coordinate (w4) at (7, -1);
    
    \foreach \punto in {1,...,5}
        \fill (v\punto) circle(3pt);
	
	\foreach \punto in {1,...,4}
        \fill (w\punto) circle(3pt);

    \node[above] at (v1) {$v_1$};
    \node[above] at (v2) {$v_2$};
    \node[above] at (v3) {$v_3$};
    \node[above] at (v4) {$v_4$};
    \node[above] at (v5) {$v_5$};
    
    \node[below] at (w1) {$w_1$};
    \node[below] at (w2) {$w_2$};
    \node[below] at (w3) {$w_3$};
    \node[below] at (w4) {$w_4$};

    \draw[thick] (v1) -- (w1);
    \draw[thick] (v2) -- (w1);
    \draw[thick] (v2) -- (w2);
    \foreach \i in {1,...,3} {
        \draw[thick] (v3) -- (w\i);
    }
   
    \foreach \i in {1,...,4} {
        \draw[thick] (v4) -- (w\i);
        \draw[thick] (v5) -- (w\i);
    }
   
\end{tikzpicture}
\end{center}

\caption{Chain graph}\label{fig: Chain}

\end{figure}
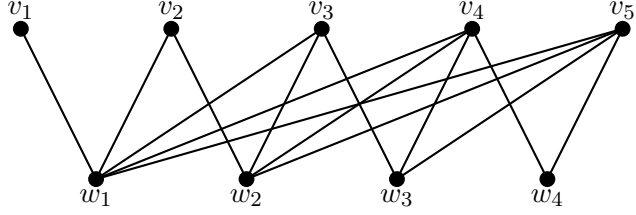

\begin{proposition}\label{thm: chain}
	Let $\mathcal K_{r,s,m}$  be the family of signed complete bipartite graphs $\dot{G}=(K_{r,s},H_m^-)$ such that $H_m$ satisfies the following conditions for each $1\le i<j\le r$ 
	\begin{enumerate}
		\item[a)]\label{item: thm of chain 1}	$d_{H_m}(v_i)+d_{H_m}(v_j)\le \frac s 2$, or 
		\item[b)]\label{item: thm of chain 2}	$\frac {3s} 2\le d_{H_m}(v_i)+d_{H_m}(v_j)$,
	\end{enumerate}
	for any $v_i, v_j \in V(H_m)$.
	
	If $(K_{r,s},{H^*_m}^-)$ maximizes the spectral radius among all signed graphs in $\mathcal K_{r,s,m}$, then the maximum spectral radius is achieved by at least one signed graph $\dot{G}=(K_{r,s},H_m^-)$ where $H_m$ is a chain graph. In addition, if the inequalities $a)$ and $b)$ are strict, then $H^*_m$ is a chain graph. 
\end{proposition}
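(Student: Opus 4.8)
The plan is to pass to the square of the adjacency matrix and reduce the whole question to a weighted combinatorial minimization controlled by a single, fixed Perron vector. First I would write $A(\dot G)=\begin{pmatrix}0&M\\ M^{T}&0\end{pmatrix}$, where $M=J-2B$ is the $r\times s$ sign matrix of $\dot G$ (so $M_{ik}=1$ on positive edges and $M_{ik}=-1$ on negative edges) and $B$ is the biadjacency matrix of $H_m$. Since bipartite signed graphs are sign-symmetric, $\rho(\dot G)=\lambda_1(\dot G)$, and by Corollary \ref{cor: square spectrum} applied to $A(\dot G)^2=\begin{pmatrix}MM^{T}&0\\ 0&M^{T}M\end{pmatrix}$ we get $\rho(\dot G)^{2}=\lambda_1(MM^{T})$; thus it suffices to maximize $\lambda_1(MM^{T})$ over $\mathcal K_{r,s,m}$. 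A direct computation gives $(MM^{T})_{ii}=s$ and $(MM^{T})_{ij}=s-2\,|N_{H_m}(v_i)\triangle N_{H_m}(v_j)|$. The role of hypotheses a) and b) is precisely to force $MM^{T}\ge 0$: under a) one has $|N_{H_m}(v_i)\triangle N_{H_m}(v_j)|\le d_{H_m}(v_i)+d_{H_m}(v_j)\le s/2$, while under b) one has $|N_{H_m}(v_i)\triangle N_{H_m}(v_j)|\le 2s-(d_{H_m}(v_i)+d_{H_m}(v_j))\le s/2$, so in either case the off-diagonal entries are nonnegative. Theorem \ref{thm: Perron} then provides an eigenvector $u\ge 0$ with $MM^{T}u=\lambda_1(MM^{T})u$.

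Next I would record the key identity $u^{T}MM^{T}u=s\,T^{2}-4F$, where $T=\sum_i u_i$ and $F=\sum_{1\le i<j\le r}|N_{H_m}(v_i)\triangle N_{H_m}(v_j)|\,u_iu_j=\sum_{k=1}^{s}\alpha_k(T-\alpha_k)$ with $\alpha_k=\sum_{v_i\in N_{H_m}(w_k)}u_i$. The crucial observation is that $T$ and $\|u\|$ depend only on the fixed vector $u$, not on the underlying graph. Hence, for the maximizer $\dot G^{*}$ with Perron vector $u$ and for any competitor $\dot G'=(K_{r,s},{H'_m}^{-})\in\mathcal K_{r,s,m}$ with sign matrix $M'$, the Rayleigh quotient yields $\rho(\dot G')^{2}=\lambda_1(M'M'^{T})\ge u^{T}M'M'^{T}u/\|u\|^{2}=(sT^{2}-4F')/\|u\|^{2}$, with equality for $\dot G^{*}$ because there $u$ is the actual Perron eigenvector. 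Consequently maximality of $\dot G^{*}$ forces $F(H^{*}_m)\le F(H'_m)$ for every member of the family; that is, $H^{*}_m$ \emph{minimizes} the fixed-weight functional $F$ over $\mathcal K_{r,s,m}$.

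Then I would exploit this minimality through a single degree-preserving move. Choose a row $v_i$ and two columns $w_k,w_{k'}$ with $v_iw_k$ negative and $v_iw_{k'}$ positive, and relocate that negative edge from $w_k$ to $w_{k'}$. This leaves every $X$-degree unchanged (hence preserves both the edge count $m$ and membership in $\mathcal K_{r,s,m}$, since the hypotheses only constrain the $X$-degrees) and changes $F$ by $2u_i(\alpha_k-\alpha_{k'}-u_i)$. Minimality forces this quantity to be $\ge 0$, so whenever $u_i>0$ and $v_i\in N_{H_m}(w_k)\setminus N_{H_m}(w_{k'})$ we obtain $\alpha_{k'}<\alpha_k$. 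If two columns had incomparable neighborhoods among the positive-weight rows, applying this inequality to a witness in each direction would give both $\alpha_{k'}<\alpha_k$ and $\alpha_k<\alpha_{k'}$, a contradiction. Therefore the column neighborhoods are nested, i.e. $H^{*}_m$ is a chain graph on the positive-weight rows.

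Finally I would close the two cases. Under the strict hypotheses one checks that $MM^{T}>0$ entrywise, so the strict part of Theorem \ref{thm: Perron} makes $\lambda_1$ simple and $u>0$; the previous paragraph then applies to all rows and $H^{*}_m$ itself is a chain graph. In the non-strict case I would remove the residual freedom by relocating the edges incident to rows with $u_i=0$ (such relocations leave every $\alpha_k$, hence $F$ and the degree sequence, untouched) so as to nest them with the rest, producing a chain graph that still attains the maximum. The main obstacle I anticipate is exactly this comparison mechanism: one must compare two different graphs through one fixed vector via the Rayleigh inequality rather than through each graph's own eigenvector, and one must handle ties and zero Perron weights with care—this is precisely what distinguishes the weak conclusion (\emph{some} maximizer is a chain graph) from the strict one (\emph{the} maximizer is a chain graph).
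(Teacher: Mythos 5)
Your proposal is correct, and its skeleton is the same as the paper's: pass to $A(\dot G)^2$, identify $\rho(\dot G)^2=\lambda_1(MM^T)$ via Corollary \ref{cor: square spectrum}, use hypotheses a) and b) to make that block entrywise nonnegative (your entry $s-2\,|N_{H_m}(v_i)\triangle N_{H_m}(v_j)|$ is exactly the paper's $s-2(d_{H}(v_i)+d_{H}(v_j))+4|N_{H}(v_i)\cap N_{H}(v_j)|$), take a nonnegative Perron vector, and compare competitors through the Rayleigh quotient at that one fixed vector under $X$-degree-preserving relocations of negative edges. Where you genuinely differ is in how the chain structure is extracted. The paper performs one bulk compression — moving all edges joining $N_{H^*_m}(w_2)\setminus N_{H^*_m}(w_1)$ to $w_2$ over to $w_1$ — notes that the relevant block increases entrywise, and concludes $\rho(\dot G_1)\ge\rho(\dot G^*)$; to actually land on a chain maximizer this compression must be iterated with a (tacit) termination argument, which the paper leaves implicit. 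You instead convert maximality into minimality of the fixed-weight functional $F=\sum_k\alpha_k(T-\alpha_k)$, read nestedness of column neighborhoods directly off single-edge exchanges (via $\Delta F=2u_i(\alpha_k-\alpha_{k'}-u_i)\ge 0$, giving the strict inequality $\alpha_{k'}<\alpha_k$ whenever $u_i>0$), and treat rows with $u_i=0$ separately by cost-free relocation. This buys an iteration-free argument and an explicit, clean separation of the weak conclusion (some maximizer is a chain graph) from the strict one, which is precisely where the paper is terse. The only step you should not leave to a gesture is the final insertion: you must argue that each zero-weight row of degree $d$ can be given a neighborhood $S$ fitting into the existing chain $N_1\subseteq\cdots\subseteq N_p$ of positive-weight row neighborhoods (take $S\subseteq N_1$ if $d\le|N_1|$, $N_j\subseteq S\subseteq N_{j+1}$ if $|N_j|\le d\le|N_{j+1}|$, $S\supseteq N_p$ otherwise, inserting the zero-weight rows one at a time), after which the resulting graph has the same $F$, hence the same spectral radius, and is a chain graph. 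That detail is routine, so the gap is cosmetic rather than structural.
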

\begin{proof}
	Let $\dot{G} \in \mathcal K_{r,s,m}$. By Lemma \ref{lem: adjacency matrix singned graphs}, it follows that
	\[A(\dot{G})^2=A(K_{r,s})^2-2A(K_{r,s})A(H_m)-2A(H_m)A(K_{r,s})+4A(H_m)^2,\]
	where the rows of $A(K_{r,s})$ are indexed by $v_1,\ldots,v_r,w_1,\ldots,w_s$. In addition,
	\begin{equation}\label{ec: Cuadrado de A con Br y Bs}
	A(\dot{G})^2=\begin{pmatrix}
		B & 0\\
		0   & C
	\end{pmatrix},
 	\end{equation}
	where 
	\[B_{i,j}=s-2(d_H(v_j)+d_H(v_i))+4|N_H(v_i)\cap N_H(v_j)|,\]
	for any $1\le i, j \le r$.
	
	Since $\dot{G}$ is a signed bipartite graph, it is easy to see that
	\[A(\dot{G})^2=\begin{pmatrix}
		M M^t & 0\\
		0   & M^t M
	\end{pmatrix},\]
	for some $M\in \mathcal{M}_{r\times s}(\mathbb{R})$. Combining this equality, \eqref{ec: Cuadrado de A con Br y Bs} and Corollary \ref{cor: square spectrum}, we conclude that $\lambda_1^2(\dot{G})= \lambda_1(B)$.   
	
	On the one hand, if $d_H(v_i)+d_H(v_j)\le\frac s 2$, then $B_{i,j}\ge s-2(d_H(v_i)+d_H(v_j))\ge 0$. On the other hand, if $d_H(v_i)+d_H(v_j)\ge\frac {3s} 2$, then $|N_H(v_i)\cap N_H(v_j)|\ge \frac s 2$ and thus $B_{i,j}\ge 3s-2(d_H(v_i)+d_H(v_j))\ge 0$. In conclusion, $B\ge0$, and from Theorem \ref{thm: Perron}, we have that $\lambda_1(B)=\rho(B)$ and there exists a nonnegative vector $x_B$ such that $B x_B= \rho(B) x_B$. 
	
	Let $\dot{G}^*= (K_{r,s},{H^*_m}^-)$ be the signed graph which maximizes the spectral radius among all signed graphs in $\mathcal K_{r,s,m}$. Suppose $H_m^*$ is not a chain graph, without losing generality, we denote by $\{v_1,\ldots,v_k\}=N_{H_m^*}(w_2) \setminus N_{H_m^*}(w_1)$ and $\{v_{k+1},\ldots,v_\ell\}=N_{H_m^*}(w_1) \setminus N_{H_m^*}(w_2)$ with $\ell>k\ge 1 $. Let $\dot{G_1}=(K_{r,s},F_m^{-})$ be the signed graph such that $V(F_m)=V(H_m^*)$ and $E(F_m)=(E(H^*_m)\setminus\{v_iw_2 : 1\le i\le k \})\cup\{v_iw_1 :1\le i \le k \}$.  Consequently, 
	\[
	B_1-B^*=4 \sum_{i=1}^k \sum_{j=k+1}^{\ell} (e_ie_j^t + e_je_i^t),
	\]
	where $B_1$ and $B^*$ are the left upper block of $A(\dot{G_1})^2$ and $A(\dot{G}^*)^2$, respectively, as given in \eqref{ec: Cuadrado de A con Br y Bs}. 
	
	Let $\lambda_1^2(\dot{G}^*)=\rho(B^*)$ be the index of $A(\dot{G}^*)^2$, and let $x_{B^*}$ be a vector such that $B^*x_{B^*}=\lambda_1^2(\dot{G}^*)x_{B^*}$. Thus
	\[
	x_{B^*}^t \left(B_1-B^*\right)x_{B^*}\ge 0.
	\]
	Therefore, 
	\[
	\rho\left(A(\dot{G}_1)^2\right) = \rho(B_1) = \max_{x\in\mathbb{R}^r,\, x\neq \mathbf{0}}\frac{x^t B_1 x}{x^tx}\ge\rho(B^*) = \rho\left(A(\dot{G}^*)^2\right),
	\]
	Moreover, by Theorem \ref{thm: Perron}, if conditions $a)$ and $b)$ are strict, then the last inequality is also strict.
\end{proof}
Let $\mathcal {KT}_{r,s,m}$  be the family of complete bipartite signed graphs $\dot{G}=(K_{r,s},T_m^-)$ such that $r\le s$ and $T_m$ is a tree with $m$ edges. Note that if $m<r$, the signed graphs in $\mathcal {KT}_{r,s,m}$ are unbalanced due to Theorem \ref{Thm: balanced signed complete bipartite}; otherwise if $m=r$ and $T_m$ is the star $K_{1,m}$, then $\dot{G}$ is a balanced signed graph. Let us denote by $D_{i, j}$ the star or double star in $K_{r,s}$ with $V(D_{i,j})=\{v_1, \dots, v_i, w_1, \dots ,w_j\}$ and its centers are $v_1$ and $w_1$, where $\{X,\, Y\}$ is the bipartition of $K_{r,s}$ such that $X=\{v_1, \dots, v_r\}$ and $Y=\{w_1, \dots, w_s\}$ (see Fig. \ref{fig: doble estrella}). 

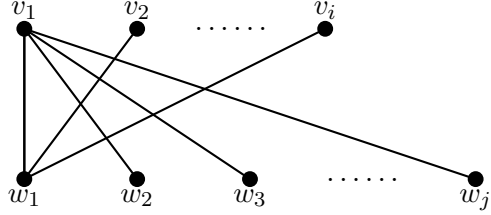
\begin{figure}

\begin{center}
\begin{tikzpicture}[scale=1]
    
    \coordinate (v1) at (0, 1);
    \coordinate (v2) at (1.5, 1);
    \coordinate (v3) at (4, 1);
    
    \coordinate  (w1) at (0, -1);
    \coordinate  (w2) at (1.5, -1);
    \coordinate  (w3) at (3, -1);
    \coordinate  (w4) at (6, -1);
    
    \foreach \punto in {1,...,3}
        \fill (v\punto) circle(3pt);
	
	\foreach \punto in {1,...,4}
        \fill (w\punto) circle(3pt);
	
    \node[above] at (v1) {$v_1$};
    \node[above] at (v2) {$v_2$};
    \node[above] at (v3) {$v_i$};
    
    \node[below] at (w1) {$w_1$};
    \node[below] at (w2) {$w_2$};
    \node[below] at (w3) {$w_3$};
    \node[below] at (w4) {$w_j$};

    \draw[thick] (v1) -- (w1);
    \draw[thick] (v1) -- (w2);
    \draw[thick] (v1) -- (w3);
    \draw[thick] (v1) -- (w4);
    
    \draw[thick] (w1) -- (v1);
    \draw[thick] (w1) -- (v2);
    \draw[thick] (w1) -- (v3);
   
    \node at (2.75,1) {$\ldots\ldots$};
    \node at (4.5,-1) {$\ldots\ldots$};   
    
\end{tikzpicture}
\end{center}

\caption{Double star $D_{i,j}$}\label{fig: doble estrella} 

\end{figure}

We will continue by presenting two technical results.
\begin{lemma}\label{lem: pol stars and double stars}
		Let $\mathcal {KDS}_{r,s,k}$  be the family of complete bipartite signed graphs $\dot{G}=(K_{r,s},H_k^-)$ such that $H_k$ is a star or double star with $k$ edges. If  $P$ denotes the characteristic polynomial of $B$, where $B$ is the left upper block of $A(\dot{G})^2$ given in \eqref{ec: Cuadrado de A con Br y Bs}, then
		\[P(\lambda)=\lambda^3-rs\lambda^2+Q_1\lambda +Q_0,\]
		where \[Q_1=4(d-1)(s(k-d)+(sk-dr))+4(r-k)(s-1)k,\]\[Q_0=16(d-s)(d-1)(k-d)(r-k+d-1),\]
		and  $d=d_{H_k}(v_1)$.
	\end{lemma}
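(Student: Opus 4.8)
The plan is to read off the entries of $B$ explicitly from the structure of the double star and then collapse the resulting symmetry through an equitable partition, reducing the problem to computing the characteristic polynomial of a single $3\times3$ matrix.

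First I would record the degree and neighbourhood data of $v_1,\dots,v_r$ inside $H_k=D_{i,j}$. Writing $d=d_{H_k}(v_1)=j$ and $k=i+j-1$, the part $X$ splits into three classes: the centre $v_1$, with $N_{H_k}(v_1)=\{w_1,\dots,w_d\}$; the $k-d$ leaves $v_2,\dots,v_i$ adjacent only to the other centre $w_1$, so $N_{H_k}(v_p)=\{w_1\}$; and the $r-k+d-1$ isolated vertices $v_{i+1},\dots,v_r$. Substituting these degrees and the pairwise intersection sizes $|N_{H_k}(v_p)\cap N_{H_k}(v_q)|$ into $B_{p,q}=s-2(d_{H_k}(v_p)+d_{H_k}(v_q))+4|N_{H_k}(v_p)\cap N_{H_k}(v_q)|$ from \eqref{ec: Cuadrado de A con Br y Bs}, I would find that every diagonal entry equals $s$ and that each off-diagonal entry depends only on the classes of $v_p$ and $v_q$. (The two genuine stars correspond to $k-d=0$ and to $d=1$, which make one class empty but leave the formula valid, so they need no separate treatment.)

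Second, these three classes form an equitable partition of $B$, so by Lemma \ref{lem: qotient matrix} the spectrum of the associated $3\times3$ quotient matrix is contained in that of $B$. I would write this quotient matrix down; its trace is $s+(k-d)s+(r-k+d-1)s=rs$, which accounts for the $-rs\lambda^2$ term. The remaining $r-3$ eigenvalues of $B$ are all $0$: any vector supported on the leaf class or on the isolated class and summing to zero is annihilated by $B$, because within each of those classes the entire block of $B$, diagonal included, is the constant $s$. Hence the characteristic polynomial of $B$ equals $\lambda^{r-3}$ times that of the quotient matrix, and this cubic factor is the polynomial $P$ of the statement; it only remains to identify $Q_1$ and $Q_0$.

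Third, I would compute the two coefficients. Factoring $(k-d)$ out of the second column and $(r-k+d-1)$ out of the third column of the quotient matrix reduces $\det$ to $(k-d)(r-k+d-1)$ times a symmetric core determinant of the form $a^3-a(p^2+q^2+t^2)+2pqt$ with $a=s$, $p=s-2d+2$, $q=s-2d$, $t=s-2$, which collapses to $16(d-1)(s-d)$; taking $Q_0=-\det$ gives the stated expression. For $Q_1$ I would sum the three principal $2\times2$ minors of the quotient matrix, each of which factors through a difference of squares, for instance $s^2-(s-2d+2)^2=4(d-1)(s-d+1)$ and $s^2-(s-2)^2=4(s-1)$, and then collect terms. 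The only real obstacle is this final algebra: confirming that the determinant and the sum of minors simplify \emph{exactly} to the claimed closed forms. There is no conceptual difficulty once the equitable reduction is in place; the difference-of-squares structure of the minors and the column factorization of the determinant are the devices that keep the bookkeeping under control.
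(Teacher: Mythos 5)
Your proposal is correct and takes essentially the same approach as the paper: both arguments reduce $B$ to the identical $3\times 3$ matrix (your equitable quotient matrix is precisely the paper's $\widehat{B}$, obtained there by elementary row and column operations rather than via Lemma \ref{lem: qotient matrix} and the $(r-3)$-dimensional kernel of class-wise zero-sum vectors) and then extract the cubic characteristic polynomial. Your closed forms check out: the symmetric core determinant does equal $16(d-1)(s-d)$, and the three principal $2\times 2$ minors, namely $4(k-d)(d-1)(s-d+1)$, $4d(s-d)(r-k+d-1)$ and $4(k-d)(r-k+d-1)(s-1)$, do collect to the stated $Q_1$.
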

	\begin{proof}
		By mimicking the proof of Proposition \ref{thm: chain}, we have that 
		$$
		B =s J_{r}-2
			\begin{pmatrix}
				0 & x^t &  y^t\\
				x &  0 & J_{l-1, r-l}\\
				y &  J_{r-l, l-1}  & 0 
			\end{pmatrix},
		$$
		where $l=k-d+1=d_{H_k}(w_1)$, $x=(d-1)J_{l-1,1}$ and $y=dJ_{r-l,1}$. \\
		After some elementary row and column operations on the matrix $B$, we obtain that such matrix is similar to 
		\[\begin{pmatrix}
			\widehat{B} & C\\
			0 & 0
		\end{pmatrix},\]
		where \[\widehat{B}=\begin{pmatrix}
			s&(l-1)(s-2(d-1))& (r-l)(s-2d)\\
			(s-2(d-1))&(l-1)s& (r-l)(s-2)\\
			(s-2d)&(l-1)(s-2)&s(r-l)
		\end{pmatrix}.\]
		Then, we have 
		$$
			P(\lambda)=\det(\widehat{B}-\lambda I)=\lambda^3-rs\lambda^2+Q_1\lambda +Q_0,
        $$
		with $Q_1=4(d-1)(s(k-d)+(sk-dr))+4(r-k)(s-1)k$ and $Q_0=16(d-s)(d-1)(k-d)(r-k+d-1)$.
\end{proof}
Now, building upon the previous lemma, we will derive the spectrum of $\dot{G}=(K_{r,s},H_k^-)$, where $H_k$ represents either a star or a double star, for specific values of the parameters $d$ and $l$. 
\begin{corollary}\label{cor: polinomios de estrellas}
Let $\dot{G}=(K_{r,s},H_k^-)$ be a complete bipartite signed graphs such that $H_k$ is a star or double star with $k$ edges. If $P$ is defined as in Lemma \ref{lem: pol stars and double stars}, then
	\begin{enumerate}
		\item  If  $H_k=D_{k,1}$, then 
		$$
			P(\lambda)=P^{(1)}(\lambda)= \lambda^3-rs\lambda^2+4(r-k)(s-1)k\lambda,
		$$
		and 
		\begin{small}
		$$
		Spec(A(\dot{G}))=\left\{\sqrt{\frac{rs-\sqrt{z}}{2}}, 0^{[r-2]}, \sqrt{\frac{rs+\sqrt{z}}{2}}\right\},
		$$
		\end{small}
		where $z=(rs)^2-16(r-k)(s-1)k$.
		\item  If  $H_k=D_{r,k+1-r}$, then 
		$$ 
		P(\lambda) = P^{(2)}(\lambda)= \lambda^3-rs\lambda^2+4(k-r)(r-1)(s+r-k)\lambda, 
		$$
		and 
		\begin{small}
		$$
		Spec(A(\dot{G}))=\left\{\sqrt{\frac{rs-\sqrt{z}}{2}}, 0^{[r-2]}, \sqrt{\frac{rs+\sqrt{z}}{2}}\right\},
		$$
		\end{small}
		where $z=(rs)^2-16(k-r)(r-1)(s+r-k)$.
		\item If  $H_k=D_{1,k}$, then  
		$$
			P(\lambda) = P^{(3)}(\lambda) = \lambda^3-rs\lambda^2+4k(r-1)(s-k)\lambda
		$$
		and 
		\begin{small}
		$$
		Spec(A(\dot{G}))=\left\{\sqrt{\frac{rs-\sqrt{z}}{2}}, 0^{[r-2]}, \sqrt{\frac{rs+\sqrt{z}}{2}}\right\},
		$$
		\end{small}
		where $z=(rs)^2-16k(r-1)(s-k)$.
		\item  If  $H_k=D_{k+1-s,s}$, then 
		$$
			P(\lambda)=P^{(4)}(\lambda)=\lambda^3-rs\lambda^2 +4(r-k+s)(s-1)(k-s)\lambda,
		$$
		and 
		\begin{small}
		$$
		Spec(A(\dot{G}))=\left\{\sqrt{\frac{rs-\sqrt{z}}{2}}, 0^{[r-2]}, \sqrt{\frac{rs+\sqrt{z}}{2}}\right\},
		$$
		\end{small}
		where $z=(rs)^2-16(r-k+s)(s-1)(k-s)$.
\end{enumerate}
 
\end{corollary}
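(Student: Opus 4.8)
The plan is to read the Corollary as a direct specialization of Lemma~\ref{lem: pol stars and double stars}: each of the four graphs $D_{k,1}$, $D_{r,k+1-r}$, $D_{1,k}$ and $D_{k+1-s,s}$ is a star or a double star, so its polynomial $P$ is already produced by that lemma once we substitute the correct value of $d=d_{H_k}(v_1)$. Recalling that in $D_{i,j}$ the center $v_1$ has degree $j$ and $w_1$ has degree $i$ (with $k=i+j-1$, so that $l=k-d+1=i$), I would first record the four values $d=1$ for $D_{k,1}$, $d=k+1-r$ for $D_{r,k+1-r}$, $d=k$ for $D_{1,k}$, and $d=s$ for $D_{k+1-s,s}$.

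Next I would substitute these into the expressions $Q_1=4(d-1)(s(k-d)+(sk-dr))+4(r-k)(s-1)k$ and $Q_0=16(d-s)(d-1)(k-d)(r-k+d-1)$ from the lemma. The key observation is that in each case exactly one factor of $Q_0$ vanishes: the factor $(d-1)$ when $d=1$, the factor $(r-k+d-1)$ when $d=k+1-r$, the factor $(k-d)$ when $d=k$, and the factor $(d-s)$ when $d=s$. Hence $Q_0=0$ in all four cases, so $P(\lambda)=\lambda(\lambda^2-rs\lambda+Q_1)$. The remaining work is a routine expansion simplifying $Q_1$ to the claimed factored form in each case; this is immediate for $D_{k,1}$ (where also the first summand of $Q_1$ dies since $d-1=0$) and for $D_{1,k}$, and a slightly longer but elementary algebraic identity for $D_{r,k+1-r}$ and $D_{k+1-s,s}$, where both surviving factors of $Q_1$ must be matched sign-for-sign.

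With $P$ factored, its nonzero roots are $\tfrac{rs\pm\sqrt{z}}{2}$ with $z=(rs)^2-4Q_1$, which reproduces the four stated values of $z$ after inserting each simplified $Q_1$. To pass from $B$ to $A(\dot G)$ I would reuse the block structure established in the proof of Proposition~\ref{thm: chain}: $B$ is the left upper block $MM^{t}$ of $A(\dot G)^2$, and the row/column reduction of Lemma~\ref{lem: pol stars and double stars} shows that, besides the roots of $P$, all other eigenvalues of $B$ are $0$. Thus the eigenvalues of $B$ are $\tfrac{rs\pm\sqrt{z}}{2}$ together with $0^{[r-2]}$. Since $\textrm{Spec}(MM^{t})\setminus\{0\}=\textrm{Spec}(M^{t}M)\setminus\{0\}$ and $\lambda_1^2(\dot G)=\lambda_1(B)$ by Corollary~\ref{cor: square spectrum}, and because the eigenvalues of the symmetric matrix $A(\dot G)$ are the (signed) square roots of those of $A(\dot G)^2$, taking square roots of the eigenvalues of $B$ yields the displayed multiset and identifies $\lambda_1(\dot G)=\sqrt{(rs+\sqrt{z})/2}$.

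I expect no genuine obstacle: the statement is essentially a verification. The only places demanding care are the bookkeeping of reading $d$ (and $l=k-d+1$) off each $D_{i,j}$, the algebraic matching of the simplified $Q_1$ to the target factorizations in the two double-star cases, and the final step of recovering $\textrm{Spec}(A(\dot G))$ from the block/square-root structure rather than directly from $B$.
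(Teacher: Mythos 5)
Your proposal is correct and takes essentially the same route as the paper: there the corollary is given no separate proof, being exactly the specialization of Lemma \ref{lem: pol stars and double stars} you carry out, with $d=1$, $d=k+1-r$, $d=k$, $d=s$ respectively killing one factor of $Q_0$, the simplification of $Q_1$ to the stated factorizations, and the spectrum read off from the block structure of $A(\dot{G})^2$ via Corollary \ref{cor: square spectrum}. Your bookkeeping of $d$ and $l$ for each $D_{i,j}$ and the resulting algebra all check out.
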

The following lemma will be used to prove the main result of this section, Theorem \ref{thm: complete bipartite graphs negative edges induce a tree}.
\begin{lemma}\label{thm: polynomials}
	Let $\mathcal {KDS}_{r,s,k}$  be the family of complete bipartite signed graphs $\dot{G}=(K_{r,s},H_k^-)$ such that $H_k$ is a star or double star with $k$ edges. Then, the maximum spectral radius in $\mathcal {KDS}_{r,s,k}$ is attained by:
	\begin{enumerate}
		\item $H_k=D_{k,1}$ when $k\le\frac{r+s}2$ and $k\le r$.
		\item $H_k=D_{r,k+1-r}$ when $k\le \frac{r+s} 2$ and $k> r$.
		\item $H_k=D_{1,k}$ when $k\ge \frac{r+s} 2$ and $k< s$.
		\item $H_k=D_{k+1-s, s}$ when $k\ge \frac{r+s} 2$ and $k\ge s$.
	\end{enumerate}
\end{lemma}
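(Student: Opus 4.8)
The plan is to parametrize every star or double star in $\mathcal {KDS}_{r,s,k}$ by the single integer $d=d_{H_k}(v_1)$ and to compare the resulting signed graphs through the cubic $P(\lambda)=\lambda^3-rs\lambda^2+Q_1\lambda+Q_0$ of Lemma \ref{lem: pol stars and double stars}, whose largest root equals $\lambda_1(\dot G)^2=\lambda_1(B)$. Since the centre $v_1$ is joined to $d$ vertices of $Y$ and the centre $w_1$ to $l=k-d+1$ vertices of $X$, the admissible range is $\max(1,k-r+1)\le d\le\min(s,k)$, and the four configurations in the statement are exactly the two endpoints of this interval ($D_{k,1}$ or $D_{r,k+1-r}$ on the left, $D_{1,k}$ or $D_{k+1-s,s}$ on the right), which is why $Q_0$ vanishes for them in Corollary \ref{cor: polinomios de estrellas}. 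The first step is to record the sign behaviour of the two coefficients. Writing $Q_0=16(d-s)(d-1)(k-d)(d-k+r-1)$, each of the four factors keeps a fixed sign on the admissible interval (nonpositive, nonnegative, nonnegative, nonnegative), so $Q_0\le 0$, with equality precisely at the endpoints; and $Q_1$, as a function of $d$, is a downward parabola (the coefficient of $d^2$ equals $-4(r+s)$), hence concave, so its minimum over the interval is attained at one of the two endpoints.

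Next I would identify which endpoint wins. Because $Q_0=0$ at both endpoints, among them the larger $\lambda_1$ is the one with the smaller $Q_1$, so it suffices to compare the two endpoint values of $Q_1$. The explicit formulas give clean factorizations: between $D_{r,k+1-r}$ and $D_{1,k}$ the difference of the $Q_1$-values is $4r(r-1)(2k-r-s)$, between $D_{k,1}$ and $D_{1,k}$ it is $4k(r-s)(k-1)$, and between $D_{k+1-s,s}$ and $D_{r,k+1-r}$ it is $4(s-r)(r+s-k)(k-r-s+1)$. The sign of the first is governed exactly by whether $k\le\frac{r+s}{2}$ or $k\ge\frac{r+s}{2}$, which separates cases $(2)$ and $(3)$; the second is nonpositive from $r\le s$ and $k\ge 1$, giving case $(1)$; and the third is nonpositive from $r\le s$ together with $1\le k\le r+s-1$ (the last bound because $H_k$ is a tree in $K_{r,s}$, whence $r+s-k\ge 1$ and $k-r-s+1\le 0$), giving case $(4)$. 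Combining these with the elementary implications $k\le r\Rightarrow k\le\frac{r+s}{2}$ and $k\ge s\Rightarrow k\ge\frac{r+s}{2}$ shows that in each regime the claimed graph is the endpoint minimizing $Q_1$; by concavity it then minimizes $Q_1$ over the whole admissible interval, so $Q_1(d)\ge Q_1(d_0)$ for every admissible $d$, where $d_0$ denotes the claimed optimizer.

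Finally I would upgrade the minimization of $Q_1$ to the maximization of the spectral radius. For an arbitrary admissible $d$ the two cubics differ by the affine function
\[
P_d(\lambda)-P_{d_0}(\lambda)=\big(Q_1(d)-Q_1(d_0)\big)\lambda+Q_0(d),
\]
whose slope is nonnegative (previous step) and whose constant term $Q_0(d)$ is nonpositive (first step); hence this difference is increasing, and it suffices to check positivity at $\lambda=\lambda_1(B_{d_0})$, after which Lemma \ref{lem: comparing polynomials}, applied to the reduced cubics, yields $\lambda_1(B_d)<\lambda_1(B_{d_0})$ and therefore $\lambda_1(\dot G_d)<\lambda_1(\dot G_{d_0})$. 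Since $Q_0(d_0)=0$ one has $\lambda_1(B_{d_0})=\tfrac12\big(rs+\sqrt{(rs)^2-4Q_1(d_0)}\big)\ge\tfrac{rs}{2}$, so positivity at that point reduces to the polynomial inequality $\tfrac{rs}{2}\big(Q_1(d)-Q_1(d_0)\big)+Q_0(d)\ge 0$. I expect this inequality to be the main obstacle: it has to be verified from the factorizations of $Q_1(d)-Q_1(d_0)$ and of $-Q_0(d)=16(s-d)(d-1)(k-d)(d-k+r-1)$ in each of the four regimes, where the comfortable slack (the left-hand side exceeds the right-hand side by a factor of order $s$) makes it hold for all but finitely many small triples $(r,s,k)$, which can be inspected by hand. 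A secondary technical point is justifying the use of Lemma \ref{lem: comparing polynomials} at the level of the $3\times 3$ block $B$ rather than the full adjacency matrix; this is immediate, since the argument only uses that the two polynomials are monic of equal degree and that the compared one is nonnegative beyond its largest root.
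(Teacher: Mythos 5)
Your setup is correct and is essentially the paper's own: parametrize by $d=d_{H_k}(v_1)$, compare the cubic $P_d$ of Lemma \ref{lem: pol stars and double stars} with the cubic $P_{d_0}$ of the claimed optimizer, note that $P_d-P_{d_0}$ is affine in $\lambda$, and conclude via Lemma \ref{lem: comparing polynomials} after checking nonnegativity for $\lambda\ge\frac{rs}{2}$. Your structural additions are correct and genuinely clarifying: the four candidates are exactly the endpoints $\max(1,k-r+1)$ and $\min(s,k)$ of the admissible range of $d$; $Q_0\le 0$ on this range with equality precisely at the endpoints; $Q_1$ is concave in $d$ with leading coefficient $-4(r+s)$, so the endpoint with the smaller $Q_1$ minimizes $Q_1$ over the whole range; and your three endpoint differences $4k(k-1)(r-s)$, $4r(r-1)(2k-r-s)$ and $4(s-r)(r+s-k)(k-r-s+1)$ are algebraically right and select the claimed winner in each of the four regimes.

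The genuine gap is the step you yourself flag as ``the main obstacle'': the inequality $\frac{rs}{2}\bigl(Q_1(d)-Q_1(d_0)\bigr)+Q_0(d)\ge 0$ is never proved. You assert it holds with ``comfortable slack'' except for finitely many small triples inspectable by hand, but this is an expectation, not an argument, and it is exactly the content that occupies the entirety of the paper's proof of this lemma. Moreover, the verification is not as uniform as your slack heuristic suggests: the slope factor $Q_1(d)-Q_1(d_0)$ degenerates in several sub-cases (e.g.\ $d=k$, or $d$ equal to the opposite endpoint, or $k=\frac{r+s}{2}$, or $r=s$), so positivity must be extracted from different terms in each regime; and the natural generic bound in the regime $k\le r$ (namely $s^2r\ge 4(s-1)(r-1)$) holds only for $s\ge 4$, forcing separate treatment of the cases $(s,r,k,d)=(3,2,2,2)$ and $(3,3,3,2)$ --- so even identifying \emph{which} small cases remain requires the case analysis you skipped. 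Until that inequality is established regime by regime (or by a uniform argument you would need to supply), what you have is a correct and tidy reduction of the lemma to precisely the computation the paper performs, not a proof of the lemma.
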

\begin{proof}
	
	Let $\dot{G}=(K_{r,s},H_k^-)$ be a complete bipartite signed graph where $H_k$ is a star or double star with $k$ edges. Let $B$, $P$, $d$ and $l$ be defined as in Lemma \ref{lem: pol stars and double stars}. We have that $\rho(\dot{G}) = \sqrt{\rho(B)}$. 
	
	As consequence of Lemma \ref{lem: comparing polynomials}, the central idea of this proof is to compare $P$ with appropriate polynomials that have been previously examined and scrutinized in Corollary \ref{cor: polinomios de estrellas}, based on the varying values taken on by the parameters $d$ and $l$. We will explore the following cases.
	
	\begin{enumerate}
	 \item	If we assume that $k\leq \frac{r+s}{2}$ and $k\leq r$, then $k\geq d$. For $d=1$, or $d=k$ and $r=s$, the polynomial $P$ simplifies to $P^{(1)}$. We can further assume that $d\geq 2$, and $d<k$ or $r<s$. "Next, we examine the polynomial difference
	\begin{align*}
		P(\lambda)-P^{(1)}(\lambda)&=4(d-1)Q(\lambda),
	\end{align*}
	where \begin{align*}
		Q(\lambda)&=(s(k-d)+(sk-dr))\lambda-4(s-d)(k-d)(r-k+d-1)\\
		&=(2sk-d(s+r))\lambda-4(s-d)(k-d)(r-k+d-1).
	\end{align*}
	Notes that $Q(\lambda)$ represents is a line with a positive slope, as $2sk-d(s+r)=2s(k-d)+d(s-r)>0$. Since $\tilde{\lambda}=\frac{rs}{2}$ is not greater than the maximum positive root of $P^{(1)}$, it is sufficient to prove that $Q(\tilde{\lambda})>0$. Therefore, for $s\geq 4$, it follows that
	\begin{align*}
		Q(\tilde{\lambda})&=d(s-r)\tilde{\lambda}+s^2r(k-d)-4(s-d)(k-d)(r-k+d-1)\\
		&=d(s-r)\tilde{\lambda}+(k-d)(s^2r-4(s-d)(r-k+d-1))\\
		&\geq d(s-r)\tilde{\lambda}+(k-d)(s^2r-4(s-1)(r-1))>0.
	\end{align*}

	On the other hand, for $s<4$, the inequalities $4>s>r\geq k \geq d \geq 2$ or $4>s\ge r\geq k > d \geq 2$ lead to specific cases: $s=3, r=k=d=2$ or $s=r=k=3, d=2$. In these cases, $Q(\tilde{\lambda})>0.$
	
	\item If we assume that $k\leq \frac{r+s}{2}$ and $k > r$, then $s>\frac{s+r}{2}\geq k >r$. For $d=k-r+1$, or $d=k=\frac{r+s}{2}$, we obtain that $P=P^{(2)}$. So we can assume that $r\geq 2$. We will now analyze the polynomial
	\begin{align*}
		P(\lambda)-P^{(2)}(\lambda)&=4(d-k+r-1)Q(\lambda),
	\end{align*}
	where \begin{align*}
		Q(\lambda)&=[(k-d)(s-r)+(s+r-2d)r]\lambda-4(s-d)(d-1)(k-d).
	\end{align*}	
	We begin by considering the following specific cases:\\
	When $k=d$ and $k\neq \frac{r+s}{2}$, it follows that
	\begin{align*}
		P(\lambda)-P^{(2)}(\lambda)&=4(r-1)(s+r-2d)r\lambda>0, 
	\end{align*}
	for any $\lambda >0$ since $2d=2k<r+s$. \\
	Now, let us assume that $k>d$ and $k=\frac{r+s}{2}$. From the first inequality, we conclude that $(k-d)+(r-1)>0$, or equivalently $d>k-r+1$. By combining the inequalities $k>d>k-r+1$, we deduce that $r\geq 3$. Then, we have
	$$
	P(\lambda)-P^{(2)}(\lambda)
		\geq 4(d-k+r-1)\left(\frac{s+r}{2}-d\right)\left[(s+r)\lambda-(s-1)^2\right].
    $$
	Thus, we conclude that 
    $$
    P(\lambda)-P^{(2)}(\lambda) >0,
    $$
    for $\lambda \ge \tilde{\lambda}=\frac{rs}{2}$.
	
	Let $k>d>k-r+1$ and $k<\frac{r+s}{2}$, so we get 
	\begin{align*}
		Q(\lambda)&=((k-d)(s-r)+(s+r-2d)r)\lambda
		-4(s-d)(d-1)(k-d)\\
		&=(k-d)[s\lambda-4(s-1)(d-1)]+(s+r-k-d).\end{align*}
	In conclusion, since $Q(\tilde{\lambda}) \geq 0$, so $P(\lambda)> P^{(2)}(\lambda)$ for any $\lambda \geq \tilde{\lambda}$.
	
	\item If $k\geq \frac{r+s}{2}$ and $k<s$, then $r<k<s$ and $k\geq d\geq k-r+1$. In this case, we consider 
	\begin{align*}
		P(\lambda)-P^{(3)}(\lambda)&=4(k-d)Q(\lambda),
	\end{align*}
	where 
	$$
	Q(\lambda)=a_1\lambda +a_0,
	$$ 
	with 
	$$a_0=-4(s-d)(d-1)(r-k+d-1)],$$  
	and
	$$a_1=2(d-1)s-d(s-r)-(k-1)(s-r).$$ 
	Note that the linear coefficient of the polynomial $Q(\lambda)$ can be expressed as $a_1=(2k-(s+r))r+(s+r)(d-k+r-1)\geq 0$. Consequently, $P(\lambda)-P^{(3)}(\lambda)$ is a line with a positive slope.
	
	We consider the following specific cases\\
	If $d=k-r+1$ and $k=\frac{r+s}{2}$, or if $k=d$, then $P=P^{(3)}$. 
	
	Let  $k> d=k-r+1$ and $k>\frac{r+s}{2}$, then  \[P(\lambda)-P^{(3)}(\lambda)=4(k-d)(2k-(s+r))r\lambda>0, \]
	for any $\lambda >0.$
	
	If  $k>d>k-r+1$ and $k=\frac{r+s}{2}$, then  
	\[P(\lambda)-P^{(3)}(\lambda)=4(k-d)(d-k+r-1)[(s+r)\lambda -4(s-d)(d-1)]>0, \]
	for any $\lambda \geq \tilde{\lambda}=\frac{rs}{2}, $ indeed
	$$
	P(\tilde{\lambda})-P^{(3)}(\tilde{\lambda})\geq 4(k-d)(d-k+r-1)\left[(s+r)\frac{sr}{2} -(s-1)^2\right]>0.
	$$
	
	Finally, if $k>d>k-r+1$ and $k>\frac{r+s}{2}$, then 
	\begin{align*}
		Q(\lambda)&=(2k-(s+r))r\lambda +(d-k+r-1)[(s+r)\lambda-4(s-d)(d-1)]
	\end{align*}
	has positive slope,	and we have
	$$
	Q(\tilde{\lambda})>(d-k+r-1)[(s+r)s-(s-1)^2]>0.
	$$
	Therefore, $P(\lambda)-P^{(3)}(\lambda)>0$ if $\lambda \geq \tilde{\lambda}$.
	
	\item
	Suppose that $k\geq \frac{r+s}{2}$ and $k\geq s$. In this case, we have the following inequalities, $k\geq s\geq \frac{r+s}{2}\geq r$, $s\geq d\geq k-r+1$, and $k\leq r+s-1$. Let us proceed with the following consideration:
	$$
		P(\lambda)-P^{(4)}(\lambda)=4(s-d)Q(\lambda),
	$$
	where 
	$$
	Q(\lambda)=a_1\lambda+a_0,
	$$
	with
	$$
	a_0=-4(d-1)(k-d)(r-k+d-1),
	$$
	and
	$$
	a_1=(r-k+d-1)(s+r)+(r+s-k)(s-r).
	$$
	We observe that if $s=d$, then $P=P^{(4)}$. In particular, if $r=1$ or $k=d$, the equality $s=d$ holds and $P=P^{(4)}$. Now, we assume that $s>d$ and $r\ge2$. For $s=r$ and $d=k-r+1$, we obtain $P=P^{(4)}$.
	
	We only need to examine the remaining cases.
	
	Let $s>r$ and $d=k-r+1$, thus
	$$
	P(\lambda)-P^{(4)}(\lambda)=4(s-d)(r+s-k)(s-r)\lambda > 0,
	$$
	for any $\lambda >0.$
	
	For $s\geq r$ and $d>k-r+1$, then $Q(\lambda)$ is a line with positive slope since the inequality $s>d>k-r+1$ implies $r+s-k\geq 3$. Finally, we get, by $\tilde{\lambda}=\frac{rs}{2}$, 
	\begin{align*}
		Q(\tilde{\lambda})&\geq (d-(k-r+1))\left[(s+r)\frac{rs}{2}-4(d-1)(k-d)\right]\\
		&\geq (d-(k-r+1))\left[(s+r)\frac{rs}{2}-(k-1)^2\right]>0, 
	\end{align*}
	indeed as $r\geq 2$ and $s\geq 2$, then $rs-2s-2r+8>0$ or equivalently $\frac{rs}{2}>s+r-4\geq k-1$. 
	\end{enumerate}

\end{proof}
We are ready to present and prove the main result of this section.
\begin{theorem}\label{thm: complete bipartite graphs negative edges induce a tree}
	 If $m < \frac{r}{2}$ and $(K_{r,s},T^{*-}_m)$ maximizes the index among all signed graphs in $\mathcal{KT}_{r,s,m}$, then $T^*_m$ is isomorphic to $D_{k,1}$.
\end{theorem}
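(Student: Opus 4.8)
The plan is to maximize, over the trees in $\mathcal{KT}_{r,s,m}$, the quantity $\rho(B)$, where $B$ is the left upper block of $A(\dot G)^2$ in \eqref{ec: Cuadrado de A con Br y Bs}; since the underlying graph is bipartite (so $\dot G$ is sign-symmetric) and $\lambda_1^2(\dot G)=\lambda_1(B)=\rho(B)$, this is equivalent to maximizing the index. First I would record that $\mathcal{KT}_{r,s,m}\subseteq\mathcal K_{r,s,m}$ under the hypothesis $m<\frac r2$: because $T_m$ is bipartite with $m$ edges, $\sum_{v\in X}d_{T_m}(v)=m$, whence $d_{T_m}(v_i)+d_{T_m}(v_j)\le m<\frac r2\le\frac s2$ for every pair $v_i,v_j\in X$. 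Thus condition $a)$ of Proposition \ref{thm: chain} holds with strict inequality for all trees in the family, and in particular $B\ge 0$, so $\rho(B)=\lambda_1(B)$ by Theorem \ref{thm: Perron}.

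Second, I would reduce to double stars. Letting $N\in\mathcal M_{r\times s}(\mathbb R)$ be the biadjacency matrix of $T_m$ (rows indexed by $X$) and $\delta=N\mathbf 1$ the $X$-degree vector, a direct computation gives $B=sJ_r-2(\delta\mathbf 1^{t}+\mathbf 1\delta^{t})+4NN^{t}$, so that for any $x\ge 0$, writing $S=\mathbf 1^{t}x$ and $y_w=\sum_{v_i\in N_{T_m}(w)}x_i$,
\[
x^{t}Bx=sS^{2}+4\sum_{w\in Y}y_w\,(y_w-S).
\]
Let $T_m^{*}$ be a maximizer and let $x\ge 0$ be a Perron vector of its block $B$. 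If $v$ is a leaf of $T_m^{*}$ incident to a vertex $w\in Y$, and $w'\in Y$ is another vertex of $T_m^{*}$ with $y_{w'}\ge y_w$, then detaching $v$ from $w$ and reattaching it to $w'$ keeps the signed graph in $\mathcal{KT}_{r,s,m}$ (a pendant vertex is merely relocated, preserving acyclicity and connectivity); by the identity above, the value $x^{t}Bx$ evaluated on the new tree exceeds its value on $T_m^{*}$ by $8x_v\,(y_{w'}-y_w+x_v)$, which is nonnegative and strictly positive when $x_v>0$. Hence, via the Rayleigh quotient, such a relocation cannot decrease $\rho(B)$, and iterating it (toward the vertex of maximum $y$ on each side) drives $T_m^{*}$ to a tree with no induced $2K_2$. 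Since a tree is $2K_2$-free exactly when its diameter is at most $3$, i.e.\ when it is a star or a double star, maximality forces $T_m^{*}$ to be a double star with $m$ edges.

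Finally, I would apply Lemma \ref{thm: polynomials}. The hypothesis $m<\frac r2$ yields both $m\le r$ and, using $r\le s$, $m\le\frac{r+s}2$, placing us in case $1$ of that lemma; moreover the polynomial comparisons in its proof are strict for every double star other than $D_{m,1}$, so $D_{m,1}$ is the unique double star on $m$ edges of maximum spectral radius. Combining this with the reduction of the previous paragraph gives $T_m^{*}\cong D_{m,1}$.

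The hardest part is the reduction to double stars. One must ensure simultaneously that the leaf-relocation is tree-preserving and $\rho$-nondecreasing, and that an improving relocation is available whenever $T_m^{*}$ still has diameter $\ge 4$; the bipartite constraint complicates this, since a leaf of $X$ can only be reattached to a vertex of $Y$, so the two sides (via $B$ and via the symmetric block $C=M^{t}M$) must be treated together, and the case where a leaf's neighbor is already the vertex of maximum $y$ requires improving from the opposite end of a diametral path. A cleaner but more computational alternative is a single Rayleigh comparison: relabel so that $x_1\ge\cdots\ge x_r$, let $D_{m,1}$ be the star on the $m$ largest coordinates, and reduce the inequality $\rho(B_{D_{m,1}})\ge\rho(B_{T_m^{*}})$ to the scalar estimate $\big(\sum_{i\le m}x_i\big)\big(\sum_{i>m}x_i\big)\le\sum_{w\in Y}y_w\,(S-y_w)$, whose proof (controlling the coordinates of $x$ through the Perron equation $Bx=\rho(B)x$) would be the technical core.
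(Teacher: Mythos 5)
Your outline---embedding $\mathcal{KT}_{r,s,m}$ into $\mathcal K_{r,s,m}$ via the degree bound $m<\frac r2$, working with the positive block $B$ and Rayleigh quotients, reducing to stars and double stars by shifting edges, and finishing with Lemma \ref{thm: polynomials}---is exactly the paper's, and your first and third paragraphs are correct. The gap is in the second paragraph, and it is genuine: your improving move requires some $w'\neq w$ with $y_{w'}\ge y_w$, where $y_w$ \emph{includes} the Perron weight $x_v$ of the very leaf being relocated, and under this precondition maximality does not force diameter at most $3$. Concretely, let $T_m$ be the path $a_1b_1a_2b_2a_3b_3$ with $a_i\in X$, $b_i\in Y$, $m=5<\frac r2$, of diameter $5$. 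Writing the entries of $B$ for this tree and subtracting the eigenvalue equations of the coordinates $a_1$ and $a_3$ of its Perron vector $x$ gives
\[
\bigl(\rho(B)-6\bigr)\,(x_{a_1}-x_{a_3})\;=\;2\Bigl(x_{a_2}+\sum_{u\in X\setminus\{a_1,a_2,a_3\}}x_u\Bigr)\;>\;0,
\]
and $\rho(B)>6$, so $x_{a_1}>x_{a_3}$. Hence $y_{b_1}=x_{a_1}+x_{a_2}>x_{a_2}+x_{a_3}=y_{b_2}>y_{b_3}$: the unique $X$-leaf $a_1$ is already attached to the strict maximizer of $y$, and the mirror computation in the block $C$ shows the unique $Y$-leaf $b_3$ is attached to the strict maximizer of the analogous quantity $z$. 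So no relocation of your kind exists at either end of the diametral path (the escape you propose in your last paragraph, improving from the opposite end, fails for the same reason), yet the tree is far from a double star: your iteration stalls, and maximality of $T^*_m$ yields no contradiction.

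The repair is to use the exact positivity condition your own gain formula provides: the gain $8x_v\,(y_{w'}-y_w+x_v)$ is positive whenever $y_{w'}>y_w-x_v$, i.e.\ one must compare $y_{w'}$ with the $y$-value of $w$ \emph{after discounting the leaf itself}. In the example above, moving $a_1$ from $b_1$ to $b_2$ gains $8x_{a_1}x_{a_3}>0$ even though $y_{b_2}<y_{b_1}$. This is precisely what the paper does, in a stronger form, by reusing the shift of Proposition \ref{thm: chain}: taking $u$ a leaf end of a diametral path, $w_2$ its neighbor and $w_1$ the vertex at distance $2$ from $w_2$ on that path, it moves \emph{all} of $N(w_2)\setminus N(w_1)$ onto $w_1$ at once; the Rayleigh gain $8\bigl(\sum_{i\le k}x_i\bigr)\bigl(\sum_{k<j\le\ell}x_j\bigr)$ is then positive unconditionally (the Perron vector is positive since $B>0$), and the result is still a tree because the common neighbor of $w_1$ and $w_2$ on the path is untouched, so each displaced subtree is reattached to a component containing $w_1$. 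With that replacement, and keeping your first and third paragraphs, your proof becomes the paper's. Your closing ``cleaner alternative'' does not close the gap either, since its key scalar inequality is left unproved.
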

\begin{proof}
	 Note that $d_{T_m^*}(v_i)+d_{T_m^*}(v_j)\le m < \frac{r}{2}$ for $1\le i,j \le r$ and $d_{T_m^*}(w_i)+d_{T_m^*}(w_j)\le m < \frac{r}{2} \le \frac{s}{2}$ for $1\le i,j \le s$. Suppose that $T_m^*$ is not a star, neither a double star. Consequently, the diameter of $T_m^*$ is at least four. Let $u,\,v\in V(T_m^*)$ such that $u$ is leaf and there exists a path with three edges linking $u$ and $v$. If $u, v \in X$, following the idea and notation presented in the proof of Proposition \ref{thm: chain}, we denote by $w_2$ the unique neighbor of $u$ in $T_m^*$ and $w_1$ the neighbor of $v$ in the path, then $u \in \{v_1, \dots , v_k\}$ and $v \in \{v_{k+1}, \dots , v_\ell\}$. Therefore, the result follows from Proposition \ref{thm: chain} and Lemma \ref{thm: polynomials}. On the other hand, if $u, v \in Y$, for symmetry we conlude the statement. 
	
\end{proof}
\section{Unbalanced signed complete bipartite graphs}\label{sec: Unbalanced signed complete bipartite graphs}

In this section, we address the problem of finding the graph that maximizes the spectral radius among all unbalanced signed complete bipartite graphs. Note that $r\ge 2$; otherwise, the graphs would be balanced.

Let $\dot{G}^*=(K_{r,s},\sigma^*)$ be a signed complete bipartite graph with a bipartition $\{X,Y\}$, where $X=\{v_1,\ldots,v_r\}$ and $Y=\{w_1,\ldots,w_s\}$, $2\le r\le s$. The sign function $\sigma^*$ is defined such that $\sigma^*(xy)=-1$ if and only if $x=v_1$ and $y=w_1$ or vice versa. In this case, the induced subgraph of $\dot{G}^*$ consisting of the negative edges is isomorphic to $K_2$.

We begin by proving a technical lemma.

\begin{lemma}\label{lem: maximum nonnegative eigenvalue}
	 There exists an eigenvector $x\ge 0$ of $A(\dot{G}^*)$ associated with $\rho(\dot{G}^*)$. In addition, if $r>2$, then $x>0$; and, if $r=2$, $x$ has exactly one zero coordinate corresponding to the vertex $w_1$.
\end{lemma}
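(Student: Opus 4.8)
The plan is to reduce everything to the $r\times r$ upper block $B$ of $A(\dot{G}^\ast)^2$, analyse its Perron eigenvector, and then lift it to an eigenvector of $A(\dot{G}^\ast)$ itself. Ordering the vertices as $v_1,\dots,v_r,w_1,\dots,w_s$, Lemma~\ref{lem: adjacency matrix singned graphs} gives
\[
A(\dot{G}^\ast)=\begin{pmatrix} 0 & M\\ M^t & 0\end{pmatrix},\qquad M=J_{r\times s}-2E_{11},
\]
where $J_{r\times s}$ is the all-ones matrix and $E_{11}$ has a single $1$ in position $(1,1)$; thus $M$ is all-ones except for a $-1$ in the corner corresponding to the negative edge $v_1w_1$. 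By Corollary~\ref{cor: square spectrum}, $\rho(\dot{G}^\ast)^2=\lambda_1(B)$ with $B=MM^t$, and a direct computation (exactly as in the proof of Proposition~\ref{thm: chain}) yields $B_{11}=s$, $B_{1j}=B_{j1}=s-2$ for $j>1$, and $B_{ij}=s$ for $i,j>1$. Since $s\ge r\ge 2$, every entry of $B$ is nonnegative, so Theorem~\ref{thm: Perron} provides a nonnegative eigenvector $u$ with $Bu=\rho^2u$, where $\rho=\rho(\dot{G}^\ast)$; exploiting the invariance of $B$ under permutations of the indices $2,\dots,r$ (and simplicity of $\lambda_1(B)$ when $s>2$, where $B>0$) we may take $u=(a,b,\dots,b)^t$ with $a,b\ge 0$.

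Next I would lift $u$ by setting $x=(u^t,w^t)^t$ with $w=\rho^{-1}M^t u$. Then $M^tu=\rho w$ by definition and $Mw=\rho^{-1}MM^t u=\rho u$, so $A(\dot{G}^\ast)x=\rho x$, i.e.\ $x$ is an eigenvector for $\rho$. Computing $M^tu$ entrywise and writing $S=\sum_j u_j=a+(r-1)b>0$, one finds $w_i=S/\rho>0$ for every $i\ge 2$, while $w_1=\rho^{-1}\big((r-1)b-a\big)$. Hence every coordinate of $x$ is positive except possibly $w_1$, and the whole statement reduces to determining the sign of $(r-1)b-a$.

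The crux is this sign computation. From the first row of $Bu=\rho^2u$ we obtain $(\rho^2-s)a=(s-2)(r-1)b$; since $\rho^2$ is the larger root of $f(\lambda)=\lambda^2-rs\lambda+4(r-1)(s-1)$ (the nonzero part of $P^{(1)}$ in Corollary~\ref{cor: polinomios de estrellas}, applied with $k=1$ and $H=K_2=D_{1,1}$), one checks that $\rho^2>s$ whenever $(r,s)\neq(2,2)$, so $a/\big((r-1)b\big)=(s-2)/(\rho^2-s)$ and therefore
\[
(r-1)b-a>0\iff \rho^2>2(s-1).
\]
Evaluating $f$ at $2(s-1)$ gives the factorisation $f\big(2(s-1)\big)=-2(s-1)(r-2)(s-2)$. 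If $r>2$ then, using $s\ge r>2$, this value is strictly negative, so $2(s-1)$ lies strictly below the larger root $\rho^2$; thus $w_1>0$ and $x>0$. If $r=2$ then $f\big(2(s-1)\big)=0$, and since for $r=2$ the roots of $f$ are $2(s-1)$ and $2$ with $2(s-1)\ge 2$, we get $\rho^2=2(s-1)$, whence $w_1=0$ while all remaining coordinates are positive; that is, $x$ has exactly one zero coordinate, at $w_1$.

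The only point requiring separate care is the degenerate case $r=s=2$, where $B=2I$ has the non-simple eigenvalue $\rho^2=2$ and the symmetrisation argument does not pin down a unique Perron direction. There I would simply exhibit the vector $x=(1,1,0,\sqrt{2})^t$ and verify directly that $A(\dot{G}^\ast)x=\sqrt{2}\,x$; this $x\ge 0$ has its unique zero at $w_1$, matching the $r=2$ conclusion. I expect the sign analysis of $w_1$, condensed into the identity $f\big(2(s-1)\big)=-2(s-1)(r-2)(s-2)$, to be the main obstacle, with the $r=s=2$ degeneracy the only genuine edge case.
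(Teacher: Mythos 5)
Your proof is correct, and it takes a genuinely different route from the paper's. The paper argues directly on $A(\dot{G}^*)$: it takes an eigenvector for $\rho(\dot{G}^*)$, asserts by symmetry that its coordinates are constant on $\{v_2,\ldots,v_r\}$ and on $\{w_2,\ldots,w_s\}$, writes down the four resulting eigenvalue equations, and settles the signs by showing that the ratio of the two $Y$-side values equals $\bigl(\rho(\dot{G}^*)^2-r(s-1)\bigr)/(r-2)<1$ when $r>2$, the case $r=2$ being read off from the same equations. You instead \emph{construct} the eigenvector: Perron's theorem (Theorem~\ref{thm: Perron}) applied to the nonnegative upper-left $r\times r$ block $B=MM^t$ of $A(\dot{G}^*)^2$ yields a nonnegative eigenvector $u$, which you lift to $\bigl(u^t,\rho^{-1}(M^tu)^t\bigr)^t$, and the sign of the only doubtful coordinate (at $w_1$) is decided by evaluating $f(\lambda)=\lambda^2-rs\lambda+4(r-1)(s-1)$, whose larger root is $\rho(\dot{G}^*)^2$ by Corollary~\ref{cor: polinomios de estrellas}, at $\lambda=2(s-1)$, giving the factorisation $-2(s-1)(r-2)(s-2)$. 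Your route buys two things. First, nonnegativity of the eigenvector comes from Perron--Frobenius rather than from the paper's symmetry assertion, which as stated needs simplicity of $\rho(\dot{G}^*)$ (or a quotient/symmetrization argument) to be airtight. Second, you explicitly cover the degenerate case $r=s=2$, where $\rho(\dot{G}^*)^2=2=r$, so the paper's step that $r<\rho(\dot{G}^*)^2$ forces the coordinate at $w_1$ to vanish does not literally apply; your direct verification of $(1,1,0,\sqrt{2})^t$ closes that corner case. What the paper's approach buys is brevity and explicit coordinates (for $r=2$ it exhibits the whole eigenvector), but of the two arguments yours is the more self-contained.
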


\begin{proof}
	 Let $x^t=(w^t,y^t)$ be an eignevector corresponding to $\rho(\dot{G}^*)$, where $w^t=(w_1,\ldots,w_r)$, $y^t=(y_1,\ldots,y_s)$. By Corollary \ref{cor: polinomios de estrellas}, $\rho(\dot{G}^*)=\sqrt{\frac{rs+\sqrt{z}}{2}}$ with $z=(rs)^2-16(r-1)(s-1)$. By symmetry, $w_2=w_i$ for each $2\le i\le r$, and $y_2=y_j$ for each $2\le j\le s$. It follows from the eigenvalue equations $A(\dot{G}^*)x=\rho(\dot{G}^*)x$ that
	\begin{eqnarray} 
		-w_1+(r-1)w_2 &=&  \rho(\dot{G}^*)y_1, \label{ec: eigeneq_1} \\
		w_1 + (r-1)w_2 &=&  \rho(\dot{G}^*)y_2, \label{ec: eigeneq_2} \\ 
		-y_1+(s-1)y_2 &=&  \rho(\dot{G}^*)w_1, \label{ec: eigeneq_3}\\ 
		y_1 + (s-1)y_2 &=&  \rho(\dot{G}^*)w_2. \label{ec: eigeneq_4} 
	\end{eqnarray}
	Hence, 
	\begin{eqnarray}
		(r-2)(s-1)y_2 &=& (\rho(\dot{G}^*)^2-r)y_1, \nonumber\\
		(r-2)y_1 &=& (\rho(\dot{G}^*)^2-r(s-1))y_2. \label{ec: eigeneq_5} 
 	\end{eqnarray}
	First, assume that $r>2$. Since $r<\rho(\dot{G}^*)^2$ and $r\le s$, $y_1=y_2=0$ or $y_1y_2>0$. Note the first case contradicting that $x$ is an eigenvector of $A(\dot{G}^*)$. Assume, without loosing generality, that $0<y_1$ and $0<y_2$. From \eqref{ec: eigeneq_4}, it follows that $w_2>0$. By equations \eqref{ec: eigeneq_1} and \eqref{ec: eigeneq_2},
	\[
	2w_1=\rho(\dot{G}^*)(y_2-y_1).
	\]
	We will prove that $y_1<y_2$. From \eqref{ec: eigeneq_5}, we obtain
	\[\frac{y_1}{y_2}=\frac{\rho(\dot{G}^*)^2-r(s-1)}{(r-2)}<1.\]
	Therefore, $w_1>0$ and $x>0$.
	
	Assume now that $r=2$. Since $r<\rho(\dot{G}^*)^2$, it follows that $y_1=0$. Besides, since $x\neq 0$, $\rho(\dot{G}^*)^2=2(s-1)$. Assume, without loss of generality, that $y_2=1$. From equations \eqref{ec: eigeneq_1} and \eqref{ec: eigeneq_2}, it follows that $w_1=w_2=\sqrt{\frac{s-1}{2}}$.
\end{proof}
In the following statement, we provide an upper bound on the spectral radius for any unbalanced signed complete bipartite graph.
\begin{theorem}\label{thm: complete bipartite}
	Let $\dot{G}=(K_{r,s}, \sigma)$ be an unbalanced signed complete bipartite graph. Then,
	\[\rho(\dot{G})\le \sqrt{\frac{rs+\sqrt{(rs)^2-16(r-1)(s-1)}}{2}}.\]
	Besides, the equality holds if and only if $\dot{G}$ is switching isomorphic to $\dot{G}^*$.
\end{theorem}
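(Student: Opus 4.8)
The plan is to recast the extremal problem as a single minimax that decouples the choice of signature from the choice of eigenvector. Since the underlying graph is bipartite, the spectrum of every $\dot G_\sigma=(K_{r,s},\sigma)$ is symmetric about $0$, so $\rho(\dot G_\sigma)=\lambda_1(\dot G_\sigma)=\max_{\|x\|=1}x^tA(\dot G_\sigma)x$. Writing $x=(a,b)$ according to the bipartition $\{X,Y\}$ and using the off-diagonal block form of $A(\dot G_\sigma)$, one has $x^tA(\dot G_\sigma)x=2\sum_{i,j}\sigma(v_iw_j)\,a_ib_j$. Commuting the two maxima gives
\[\max_{\sigma\ \text{unbalanced}}\rho(\dot G_\sigma)=\max_{\|x\|=1}g(x),\qquad g(x):=\max_{\sigma\ \text{unbalanced}}2\sum_{i,j}\sigma(v_iw_j)\,a_ib_j.\]
First I would note that replacing a coordinate of $x$ by its absolute value amounts to switching at the corresponding vertex, which preserves both unbalancedness and the value of the sum; hence $g(|a|,|b|)=g(a,b)$ and we may assume $a,b\ge 0$ throughout.

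Next I would evaluate $g(x)$ explicitly. For $a,b\ge 0$ the sum $\sum_{i,j}\sigma(v_iw_j)a_ib_j$ is maximized by the all-positive signature, with value $(\sum_i a_i)(\sum_j b_j)$, but that signature is balanced. Every unbalanced signature has at least one negative edge (a signature with none is all-positive, hence balanced), so it loses at least $2\min_{i,j}a_ib_j$; conversely, making a single edge negative produces a $C_4$ with exactly one negative edge and is therefore unbalanced (here $2\le r\le s$ is used), achieving exactly that loss. Since $\min_{i,j}a_ib_j=(\min_i a_i)(\min_j b_j)$, this yields
\[g(x)=2\Bigl[(\textstyle\sum_i a_i)(\sum_j b_j)-2(\min_i a_i)(\min_j b_j)\Bigr].\]

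It then remains to maximize $\Phi(a,b):=g(x)$ over $a,b\ge 0$ with $\|a\|^2+\|b\|^2=1$. The key observation is that $\Phi$ depends on $a$ only through $\sum_i a_i$, $\min_i a_i$, and $\|a\|^2$; for fixed sum and fixed minimum the quotient increases as $\|a\|^2$ decreases, and minimizing $\sum_i a_i^2$ subject to a prescribed sum, a prescribed minimum, and $a_i\ge\min_i a_i$ forces $a$ to equal its minimum at a single coordinate and a common value on the remaining $r-1$ coordinates (symmetrically for $b$). Thus any maximizer has exactly the two-level shape of the Perron vector of $\dot G^*$ exhibited in Lemma~\ref{lem: maximum nonnegative eigenvalue}. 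Substituting this ansatz reduces the problem to an elementary optimization in a few scalar ratios whose value is $\rho(\dot G^*)=\sqrt{(rs+\sqrt{(rs)^2-16(r-1)(s-1)})/2}$ by Corollary~\ref{cor: polinomios de estrellas}; since $\dot G^*$ is unbalanced, this value is attained, which proves the stated upper bound.

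For the equality case I would reverse the chain. If $\rho(\dot G)=\rho(\dot G^*)$, choose a nonnegative unit Perron vector $x$ of $\dot G$ (available after a switching by Lemma~\ref{lem: spectral radius switching equivalent class}); then $x$ must maximize $\Phi$ and $\dot G$ must realize $g(x)$. The optimization shows the maximizer is unique up to scaling and relabeling, so $x$ agrees with the Perron vector of $\dot G^*$, which for $r>2$ has a unique coordinatewise minimum on each side; realizing $g(x)$ then forces the negative edges to reduce, after switching, to the single cheapest edge joining the two minimal coordinates, so $\dot G$ is switching isomorphic to $\dot G^*$. I expect the optimization to be the main obstacle: rigorously justifying the smoothing reduction to two-level vectors and the uniqueness of the maximizer, and separately treating $r=2$, where one optimal coordinate vanishes, the subtracted term disappears, and one must check that the only unbalanced way to realize $g(x)$ is again a single negative edge (flipping the whole column at the vanishing coordinate is a switching and stays balanced).
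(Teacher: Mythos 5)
Your reduction to nonnegative vectors and your formula for $g(x)$ are both correct: switching at the set of negative coordinates turns $x$ into $|x|$ while permuting the (un)balanced signatures among themselves, and for $a,b\ge 0$ every unbalanced signature has at least one negative edge, so $\sum_{i,j}\sigma(v_iw_j)a_ib_j\le(\sum_i a_i)(\sum_j b_j)-2(\min_i a_i)(\min_j b_j)$, with equality for the single negative edge at an argmin pair, which is unbalanced since $r,s\ge 2$. Up to here your mechanism is essentially the same entrywise comparison that drives the paper's proof (there it appears as $A(\dot G_2)-A(\dot G^*)\le 0$ tested against the nonnegative eigenvector supplied by Lemma~\ref{lem: spectral radius switching equivalent class}). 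The genuine gap comes next: the maximization of $\Phi(a,b)=2\bigl[(\sum_i a_i)(\sum_j b_j)-2(\min_i a_i)(\min_j b_j)\bigr]$ over nonnegative unit vectors is never carried out. You assert that a smoothing to two-level vectors plus ``an elementary optimization in a few scalar ratios'' gives $\rho(\dot G^*)$, and you flag this yourself as the main obstacle; attacked head-on it is a nonconvex four-variable problem, and nothing in the proposal computes its value. The step can, however, be bypassed entirely: your own formula says that for nonnegative unit $x$, $g(x)=x^tA(\dot G_{\sigma_x})x$ where $\sigma_x$ has a single negative edge, i.e.\ $\dot G_{\sigma_x}$ is a vertex-relabeled copy of $\dot G^*$; the Rayleigh inequality then gives $g(x)\le\lambda_1(\dot G^*)=\rho(\dot G^*)$ at once, with the value supplied by Corollary~\ref{cor: polinomios de estrellas}. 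With that observation the two-level smoothing becomes superfluous and the upper bound is complete.

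The second gap is the equality case, which you rest on ``uniqueness of the maximizer'' of $\Phi$ up to scaling and relabeling. This is unproved, and at the boundary it is false: for $r=2$ the optimal vector has a vanishing coordinate at $w_1$, so the realizing signature is not pinned down by $x$ (any negative edges incident to $w_1$ cost nothing), and for $r=s=2$ one has $z=(rs)^2-16(r-1)(s-1)=0$, so $\lambda_1(\dot G^*)$ is not even simple and the maximizing vectors form a two-dimensional cone. The robust route — which is the paper's — does not need uniqueness of an optimizer: equality in the Rayleigh chain forces $x$ to be a $\lambda_1$-eigenvector of a relabeled $\dot G^*$, and then Lemma~\ref{lem: maximum nonnegative eigenvalue} gives the sign pattern of $x$ ($x>0$ for $r>2$, exactly one zero at $w_1$ for $r=2$); positivity of the relevant coordinates together with $x^t\bigl(A(\dot G_2)-A(\dot G^*)\bigr)x=0$ forces $A(\dot G_2)=A(\dot G^*)$ when $r>2$, while for $r=2$ the only surviving alternative (both edges at $w_1$ negative) is balanced and hence excluded — exactly the check you gesture at in your last sentence. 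So: the key insight is right and the minimax framing is genuinely different, but as submitted the proof is incomplete precisely at the two places where the paper invokes the Rayleigh bound against $\dot G^*$ and Lemma~\ref{lem: maximum nonnegative eigenvalue}.
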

\begin{proof}
	By Corollary \ref{cor: polinomios de estrellas}, we have $\rho(\dot{G}^*)=\sqrt{\frac{rs+\sqrt{(rs)^2-16(r-1)(s-1)}}{2}}$. Let $\dot{G}_1=(K_{r,s},\sigma_1)$ be an unbalanced signed complete bipartite graph reaching the maximum spectrum radius among all unbalanced signed complete bipartite graphs. 
	
	As consequence of Lemma \ref{lem: spectral radius switching equivalent class}, there exists a signed graph $\dot{G}_2$ that is switching equivalent to $\dot{G}_1$ and a nonnegative eigenvector $x$ such that $A(\dot{G}_2)x=\rho(\dot{G}_1)x$, notice that $\rho(\dot{G}_2)=\rho(\dot{G}_1)$.  We can assume, without loss of generality, that $v_1w_1$ is a negative edge of $\dot{G}_2$. Hence,
	\[
	0\le\rho(\dot{G}_2)-\rho(\dot{G}^*)\le x^t\left(A(\dot{G}_2)-A(\dot{G}^*)\right)x\le 0,
	\]
	because clearly $A(\dot{G}_2)-A(\dot{G}^*)\le 0$. Therefore, we deduce that $\rho(\dot{G}_1)=\rho(\dot{G}^*)$ and $x^tA(\dot{G}^*)x=\rho(\dot{G}^*)$. In particular,  $x$ is an eigenvector of $\dot{G}^*$ associated with $\rho(\dot{G}^*)$. By Lemma \ref{lem: maximum nonnegative eigenvalue}, $x$ has all its coordinates positive, but $w_1$ whenever $r=2$. Therefore, since $x^t(A(\dot{G}_2)-A(\dot{G}^*))x=0$, we obtain that $A(\dot{G}_2)=A(\dot{G}^*)$, whenever $2<r$. When $r=2$ either, $A(\dot{G}_2)=A(\dot{G}^*)$, or $\dot{G}_2$ has $v_1w_1$ as its only negative edge, or  $\dot{G}_2$ has $v_1w_1$ and $v_2w_1$ as its only two negative edges. Note that the latter case is not possible because otherwise $\dot{G}_2$ would be balanced.
\end{proof}

The Lemma below follows by mimicking the proof of \cite[Lemma 4.5]{BrunettiandStanic2022}.

\begin{lemma}\label{lem: subgraphs chain}
	Let $\dot G$ be an unbalanced signed bipartite graph with a bipartition $\{U, W\}$ such that $|U|=n_1$ and $|W|=n_2$. Then $\dot G$ is subgraph of some unbalanced signed complete bipartite graph $\dot K_{n_1,n_2}$ such that $\lambda_1(\dot G)\le \lambda_1(\dot K_{n_1,n_2})$. Moreover, there exists a sequence $\{\dot G_i\}_{i=0}^{n_1n_2-m}$, where $\dot{G}_0=\dot{G}$, $m=|E(G)|$, and $\dot G_{n_1 n_2-m}=\dot K_{n_1,n_2}$. such that
	\begin{itemize}
		\item $\dot G_i$ is a proper subgraph of $\dot G_{i+1}$ for each $0 \le i\le n_1 n_2 - m - 1$, and
		\item $\lambda_1(\dot G_i)\le \lambda_1(\dot G_{i+1})$ for each $0 \le i\le n_1 n_2 - m - 1$.
	\end{itemize}	
\end{lemma}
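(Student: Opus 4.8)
The plan is to build the required chain one edge at a time, starting from $\dot G_0=\dot G$ and appending, at each step, a single signed edge joining $U$ and $W$, until the underlying graph becomes $K_{n_1,n_2}$. Two observations drive the argument. First, \emph{unbalancedness is monotone under edge additions}: since $\dot G$ is unbalanced, by the definition in Section~\ref{sec: preliminaries} there is a cycle $C$ of $\dot G$ carrying an odd number of negative edges; adding new edges neither deletes $C$ nor alters the signs of its edges, so $C$ survives as an odd cycle in every term of the chain and each term stays unbalanced. Second, the sign of a newly added edge is irrelevant to balancedness (the witness $C$ is fixed once and for all), so I am free to choose it solely to keep the index from dropping. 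Note also that an unbalanced graph must contain such a $C$, since a cycle-free signed graph is vacuously balanced; this is what makes the hypothesis usable.

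For the inductive step, suppose $\dot G_i$ has been constructed and is a proper signed subgraph of the complete frame, so there is a missing edge $uv$ with $u\in U$ and $v\in W$. Let $x$ be a unit eigenvector of the symmetric matrix $A(\dot G_i)$ associated with $\lambda_1(\dot G_i)$, and set $\varepsilon=1$ if $x_u x_v\ge 0$ and $\varepsilon=-1$ otherwise. Define $\dot G_{i+1}$ to be $\dot G_i$ together with the edge $uv$ of sign $\varepsilon$, so that $A(\dot G_{i+1})=A(\dot G_i)+\varepsilon(e_u e_v^t+e_v e_u^t)$, where $e_u,e_v$ are the standard basis vectors. Using $\lambda_1$ as a Rayleigh quotient of the symmetric matrix $A(\dot G_{i+1})$,
\[
\lambda_1(\dot G_{i+1})\ge x^t A(\dot G_{i+1}) x=\lambda_1(\dot G_i)+2\varepsilon x_u x_v=\lambda_1(\dot G_i)+2\lvert x_u x_v\rvert\ge\lambda_1(\dot G_i),
\]
so the index does not decrease, while $\dot G_i$ is by construction a proper subgraph of $\dot G_{i+1}$. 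Equivalently, one may first invoke Lemma~\ref{lem: spectral radius switching equivalent class} to pass to a switching-equivalent representative whose $\lambda_1$-eigenvector is nonnegative and then append a \emph{positive} edge; this is the route taken in \cite[Lemma 4.5]{BrunettiandStanic2022}, and it produces the same non-decrease.

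Iterating this step exactly $n_1n_2-m$ times fills in every missing edge between $U$ and $W$, so $\dot G_{n_1n_2-m}$ has underlying graph $K_{n_1,n_2}$; it is unbalanced by the monotonicity observation and satisfies $\lambda_1(\dot G)=\lambda_1(\dot G_0)\le\cdots\le\lambda_1(\dot G_{n_1n_2-m})$ by telescoping, which also yields the first assertion upon setting $\dot K_{n_1,n_2}:=\dot G_{n_1n_2-m}$. The only point requiring care—and the crux of the argument—is the simultaneous control of the two requirements: the Rayleigh estimate dictates a specific sign on the new edge to force $2\varepsilon x_u x_v\ge 0$, and one must verify that this forced choice cannot spoil unbalancedness. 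It cannot, precisely because the witnessing odd cycle $C$ lies entirely in $\dot G_0$ and is untouched by later additions, so the two constraints decouple and the sign of each new edge can be chosen freely to match the eigenvector.
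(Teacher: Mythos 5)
Your proof is correct and follows essentially the same route as the paper, which simply defers to the argument of \cite[Lemma 4.5]{BrunettiandStanic2022}: add the missing edges one at a time, use a Rayleigh-quotient estimate to show the index cannot decrease, and note that the witnessing cycle with an odd number of negative edges survives every addition, so unbalancedness is preserved. Your choice of the new edge's sign to match the eigenvector entries is just a switching-free repackaging of the nonnegative-eigenvector argument via Lemma~\ref{lem: spectral radius switching equivalent class}, as you yourself observe.
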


Let us state and prove the main result of the article. To achieve this result, we need the following technical lemmas, each of which holds significance on its own.

\begin{lemma}\label{lem: complete bipartite without an edge and only one negative edge}
	Let $\dot G$ be an unbalanced signed bipartite graph with a bipartition $\{X, Y\}$ such that $|X|=r \le s=|Y|$ and $|E(\dot{G})|=rs-1$. If $\dot{G}$ has only one negative edge, then $\lambda_1(\dot{G})<\lambda_1(\dot{G}^*)$. 
\end{lemma}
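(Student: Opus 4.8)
The plan is to reduce the statement to the uniqueness part of Theorem~\ref{thm: complete bipartite} by completing $\dot G$ to a complete bipartite signed graph while keeping track of a convenient eigenvector. Write $\lambda=\lambda_1(\dot G)$ and $\rho^*=\lambda_1(\dot G^*)$; since every bipartite signed graph is sign-symmetric, $\lambda=\rho(\dot G)$ and $\rho^*=\rho(\dot G^*)$. First I would invoke Lemma~\ref{lem: spectral radius switching equivalent class} to pass to a representative $\dot G'$ of the switching class of $\dot G$ admitting a nonnegative eigenvector $z\ge 0$ with $A(\dot G')z=\lambda z$. Switching alters neither the underlying graph nor the spectrum, so $\dot G'$ is again $K_{r,s}$ minus a single edge $e_0=uw$ (with $u\in X$, $w\in Y$) and is still unbalanced, and $\lambda_1(\dot G')=\lambda$.

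Next I would form $\dot K$ by inserting $e_0$ as a \emph{positive} edge, so that $A(\dot K)=A(\dot G')+E_0$ with $E_0=e_ue_w^t+e_we_u^t\ge 0$. The graph $\dot K$ is complete bipartite and, being a supergraph of the unbalanced $\dot G'$, is itself unbalanced. Evaluating the Rayleigh quotient at $z$ gives
\[
\lambda_1(\dot K)\ \ge\ \frac{z^tA(\dot K)z}{z^tz}\ =\ \lambda+\frac{2z_uz_w}{z^tz}\ \ge\ \lambda,
\]
while Theorem~\ref{thm: complete bipartite} yields $\lambda_1(\dot K)\le\rho^*$, with equality precisely when $\dot K$ is switching isomorphic to $\dot G^*$. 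Chaining the two bounds already produces $\lambda\le\rho^*$, and the remaining task is to rule out equality.

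To obtain strictness I would argue by contradiction, assuming $\lambda=\rho^*$. Then both displayed inequalities are tight. The equality $\lambda_1(\dot K)=\rho^*$ forces $\dot K$ to be switching isomorphic to $\dot G^*$ by the uniqueness in Theorem~\ref{thm: complete bipartite}, and the fact that $z\ge 0$ attains the maximal Rayleigh quotient of the symmetric matrix $A(\dot K)$ forces $z$ to be a Perron eigenvector of $\dot K$, that is $A(\dot K)z=\rho^* z$. Subtracting $A(\dot G')z=\rho^* z$ gives $E_0z=0$, whose two coordinate equations read $z_w=0$ and $z_u=0$; thus $z$ has at least two vanishing coordinates. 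On the other hand, $\rho^*$ is a simple eigenvalue of $\dot G^*$ (its spectrum is computed in Corollary~\ref{cor: polinomios de estrellas}), hence also of the switching-isomorphic $\dot K$, so its Perron eigenvector is determined up to scaling and the number of its zero coordinates coincides with that for $\dot G^*$. By Lemma~\ref{lem: maximum nonnegative eigenvalue} this number is at most one, contradicting the two zeros of $z$. Therefore $\lambda<\rho^*$, as claimed.

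The main obstacle—and the reason a naive ``deleting an edge lowers the index'' argument fails for signed graphs—is the sign interaction between the reinstated edge and an eigenvector with mixed signs. Completing $\dot G'$ (rather than $\dot G$) with a \emph{positive} copy of $e_0$, so that the nonnegative eigenvector $z$ is available, is exactly what sidesteps this: it makes the Rayleigh estimate monotone and converts the equality case into a rigid statement about the zero pattern of the Perron eigenvector, where the simplicity of $\rho^*$ together with Lemma~\ref{lem: maximum nonnegative eigenvalue} delivers the contradiction.
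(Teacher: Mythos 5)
Your proof is correct, and it takes a genuinely different route from the paper's. The paper argues computationally: writing $\dot G=\dot G^*-\{e\}$ for a positive edge $e$, it splits into three cases according to whether $e$ is incident to $v_1$, to $w_1$, or to neither, and in each case computes the block $B$ of $A(\dot G)^2$, passes to an equitable quotient matrix (Lemma \ref{lem: qotient matrix}), and compares the resulting characteristic polynomial with $P^*$ via Lemma \ref{lem: comparing polynomials}, checking a handful of small values of $r$ and $s$ directly. You instead switch to a representative $\dot G'$ carrying a nonnegative $\lambda_1$-eigenvector $z$ (Lemma \ref{lem: spectral radius switching equivalent class}), reinsert the missing edge positively to get a complete bipartite unbalanced $\dot K$, and squeeze $\lambda_1(\dot G)\le\lambda_1(\dot K)\le\lambda_1(\dot G^*)$ via the Rayleigh quotient and Theorem \ref{thm: complete bipartite}; equality is then excluded by rigidity, since it would force $z$ to be a $\lambda_1$-eigenvector of $\dot K$ annihilated by $E_0$, hence with two zero coordinates, whereas simplicity of $\rho(\dot G^*)$ together with Lemma \ref{lem: maximum nonnegative eigenvalue} allows at most one. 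This is shorter, eliminates all case analysis and polynomial computation, and legitimately exploits the equality case of Theorem \ref{thm: complete bipartite} (that theorem is proved before, and independently of, this lemma, so there is no circularity); the paper's approach, in exchange, is self-contained at the matrix level and yields the explicit characteristic polynomials. One step you should make explicit: the simplicity of $\rho(\dot G^*)$, which you read off from Corollary \ref{cor: polinomios de estrellas}, fails precisely when $r=s=2$, where $z=(rs)^2-16(r-1)(s-1)=0$ and the two extreme eigenvalues collide. That case is vacuous for the lemma, because $K_{2,2}$ minus an edge is a path and so admits no unbalanced signature; in every remaining case $s\ge 3$, hence $z>0$ (equivalently, the matrix $B^*$ is entrywise positive and Theorem \ref{thm: Perron} applies), and $\rho(\dot G^*)$ is indeed a simple eigenvalue. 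With that one-line remark added, your argument is complete.
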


\begin{proof}
	 We can assume, without loss of generality, that $v_1w_1$ is the negative edge of $\dot{G}$. To prove the result, we will analyze the square of the adjacency matrix of both signed graphs.  First consider the square of the adjacency matrix of $\dot{G}^*$,
	\[
        A(\dot{G}^*)^2=
        \begin{pmatrix}
		B^*&0\\
		0&C^*
        \end{pmatrix},
    \]
	where $B^*=\left(\begin{smallmatrix}
		s& (s-2)J_{1,r-1}\\
		(s-2)J_{r-1,1}& sJ_{r-1,r-1}
	\end{smallmatrix}\right)$. By Corollary \ref{cor: square spectrum}, $\rho(A(\dot{G}^*)^2)=\rho(B^*)$. The quotient matrix of $B^*$ is $Q^*=\left(\begin{smallmatrix}
	s& (r-1)(s-2)\\
	s-2& (r-1)s
\end{smallmatrix}\right)$ and its characteristic polynomial is $P^*(\lambda)=\lambda^2-rs\lambda+4(r-1)(s-1)$. 

We will split the proof into three cases.
\begin{enumerate} 

\item\label{lem: sin una arista - item 1} $\dot{G}=\dot{G}^*-\{v_1w_2\}$

Hence,
\[A(\dot{G})^2=\begin{pmatrix}
	B&0\\
	0&C
\end{pmatrix},\]
where $B=\left(\begin{smallmatrix}
	s-1           & (s-3)J_{1,r-1}\\
	(s-3)J{r-1,1} & s J_{r-1, r-1}
\end{smallmatrix}\right)$. Its quotient matrix, under the partition $\{\{v_1\},X\setminus\{v_1\}\}$, is $Q=\left(\begin{smallmatrix}
	s-1& \ \ (r-1)(s-3)\\
	s-3& \ \ (r-1)s
\end{smallmatrix}\right)$, and its characteristic polynomial is $P(\lambda)=\lambda^2-(rs-1)\lambda+(r-1)(5s-9)$. Besides, since $\dot{G}$ is unbalanced, $s\ge 3$.

We have $(P-P^*)(\lambda)=\lambda+(r-1)(s-5)>0$ for every $s\ge 5$ and $\lambda>0$. The remaining cases, i.e., $s\in\{3,4\}$, can be checked by computing the spectral radius of $\dot{G}$. Therefore, the result follows from lemmas \ref{lem: qotient matrix} and \ref{lem: comparing polynomials}.

\item $\dot{G}=\dot{G}^*-\{v_2w_1\}$

Hence,
\[A(\dot{G})^2=\begin{pmatrix}
	B & 0\\
	0&C
\end{pmatrix},\]
	where $B=\left(\begin{smallmatrix}
	s& s-1&(s-2) J_{1,r-2}\\
	s-1& s-1&(s-1)J_{1,r-2}\\
	(s-2)J_{r-2,1}& (s-1)J_{r-2,1}&sJ_{r-2,r-2}
\end{smallmatrix}\right)$. 

Its quotient matrix  is $Q=\left(\begin{smallmatrix}
s& \ \ s-1& \ \ (r-2)(s-2)\\
s-1& \ \ s-1& \ \ (r-2)(s-1)\\
s-2& \ \ s-1& \ \ (r-2)s
\end{smallmatrix}\right)$, under the partition $\{\{v_1\},\{v_2\},X\setminus\{v_1,v_2\}\}$, and its characteristic polynomial is $P(\lambda)=\lambda\left[\lambda^2-(rs-1)\lambda+(5r-9)(s-1)\right]$. Besides, since $\dot{G}$ is unbalanced, $r\ge 3$. 

We have $(P-\lambda P^*)(\lambda)=\lambda^2+(s-1)(r-5)\lambda>0$ for every $r\ge 5$ and $\lambda>0$. The remaining cases, i.e., $r\in\{3,4\}$, can be checked by computing the spectral radius of $\dot{G}$. Therefore, the result follows from lemmas \ref{lem: qotient matrix} and \ref{lem: comparing polynomials}.

\item $\dot{G}=\dot{G}^*-\{v_2w_2\}$

Hence,
\[A(\dot{G})^2=\begin{pmatrix}
	B & 0\\
	0&C
\end{pmatrix},\]
where $B=\left(\begin{smallmatrix}
	s & s-3 &(s-2) J_{1,r-2}\\
	s-3& s-1&(s-1) J_{1,r-2}\\
	(s-2) J_{r-2,1}& (s-1) J_{r-2,1} & s J_{r-2,r-2}
\end{smallmatrix}\right)$. 

The quotient matrix of $B$ is $Q=\left(\begin{smallmatrix}
	s& \ \ s-3& \ \ (r-2)(s-2)\\
	s-3& \ \ s-1& \ \ (r-2)(s-1)\\
	s-2& \ \ s-1& \ \ (r-2)s
\end{smallmatrix}\right)$, under the partition $\{\{v_1\},\{v_2\}, X\setminus\{v_1,v_2\}\}$, and its characteristic polynomial is $P(\lambda)=\lambda^3-(rs-1)\lambda^2+(5(r-1)(s-1)-4)\lambda-4(s-2)(r-2)$. We have $(P-\lambda P^*)(\lambda)=\lambda^2+(rs-r-s-3)\lambda-4(r-2)(s-2)$. On the one hand, if $r=2$, then $(P-\lambda P^*)(\lambda)=\lambda(\lambda+(r-1)(s-5))$, and the result follows from item \ref{lem: sin una arista - item 1}. On the other hand, if $r\ge 3$, then $\lambda^2-4(r-2)(s-2)>0$ when $\lambda\ge \rho(B)>\frac{rs} 2$. Hence, if $r\ge 3$,  $(P- P^*)(\lambda)\ge (P-\lambda P^*)(\lambda)>0$ when $\lambda\ge \rho(B)>\frac{rs} 2>1$. Therefore, the result follows from lemmas \ref{lem: qotient matrix} and \ref{lem: comparing polynomials}.

\end{enumerate}

\end{proof}
\begin{lemma}\label{lem: optimum presecribed partition}
	Let $\dot G$ be a connected unbalanced signed bipartite graph where $\{X,Y\}$ is the bipartition of $G$ with $2\le |X|=r\le s=|Y|$. Then, 
	\[\rho(\dot G)\le \sqrt{\frac{rs+\sqrt{(rs)^2-16(r-1)(s-1)}}{2}},\]
	and the equality holds if and only if $\dot G$ is switching isomorphic to $\dot{G}^*$. 
\end{lemma}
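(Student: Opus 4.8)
The plan is to reduce the general connected case to the complete bipartite case already settled in Theorem \ref{thm: complete bipartite}, using the monotone chain of supergraphs furnished by Lemma \ref{lem: subgraphs chain}. First I would record that, since $G$ is bipartite, $\dot{G}$ is sign-symmetric, so $\rho(\dot{G})=\lambda_1(\dot{G})$, and likewise for every signed graph appearing below. Applying Lemma \ref{lem: subgraphs chain} to $\dot{G}$, I embed it into an unbalanced signed complete bipartite graph $\dot{K}_{r,s}$ with $\lambda_1(\dot{G})\le\lambda_1(\dot{K}_{r,s})$, while Theorem \ref{thm: complete bipartite} gives $\lambda_1(\dot{K}_{r,s})=\rho(\dot{K}_{r,s})\le\rho(\dot{G}^*)$. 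Chaining these inequalities yields $\rho(\dot{G})\le\rho(\dot{G}^*)=\sqrt{(rs+\sqrt{(rs)^2-16(r-1)(s-1)})/2}$, which is the stated bound.

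For the equality, the straightforward direction is that switching isomorphism to $\dot{G}^*$ preserves the spectrum, hence $\rho$. For the converse, I would assume $\rho(\dot{G})=\rho(\dot{G}^*)$, so that every inequality in the chain is tight. In particular $\rho(\dot{K}_{r,s})=\rho(\dot{G}^*)$, whence Theorem \ref{thm: complete bipartite} forces $\dot{K}_{r,s}$ to be switching isomorphic to $\dot{G}^*$. Writing the sequence $\dot{G}=\dot{G}_0\subset\dot{G}_1\subset\cdots\subset\dot{G}_N=\dot{K}_{r,s}$ from Lemma \ref{lem: subgraphs chain}, the equality $\lambda_1(\dot{G})=\lambda_1(\dot{K}_{r,s})$ combined with the monotonicity $\lambda_1(\dot{G}_i)\le\lambda_1(\dot{G}_{i+1})$ forces $\lambda_1(\dot{G}_i)=\rho(\dot{G}^*)$ for every $i$. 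I would also note that each $\dot{G}_i$ is unbalanced: $\dot{G}_0$ is unbalanced and any supergraph on the same vertex set still contains the offending cycle with an odd number of negative edges.

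The crux is then to show $\dot{G}=\dot{K}_{r,s}$. If not, $N\ge 1$ and $\dot{G}_{N-1}=\dot{K}_{r,s}-e$ for a single edge $e$. Since $\dot{K}_{r,s}$ is switching isomorphic to $\dot{G}^*$, there is a switching function $\theta$ for which $(\dot{K}_{r,s})^\theta$ carries exactly one negative edge $f$, and the same $\theta$ gives $(\dot{G}_{N-1})^\theta=(\dot{K}_{r,s})^\theta-e$. If $e=f$, then $(\dot{G}_{N-1})^\theta$ would have no negative edge and hence be balanced by Lemma \ref{lem: switching equivalent}, contradicting that $\dot{G}_{N-1}$ is unbalanced; therefore $e\ne f$, and $(\dot{G}_{N-1})^\theta$ is an unbalanced signed bipartite graph with $rs-1$ edges and a single negative edge. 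Lemma \ref{lem: complete bipartite without an edge and only one negative edge} then delivers $\lambda_1(\dot{G}_{N-1})=\lambda_1((\dot{G}_{N-1})^\theta)<\lambda_1(\dot{G}^*)=\rho(\dot{G}^*)$, contradicting $\lambda_1(\dot{G}_{N-1})=\rho(\dot{G}^*)$. Hence $\dot{G}=\dot{K}_{r,s}$, which is switching isomorphic to $\dot{G}^*$, finishing the characterization.

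The hard part will be the equality case, and specifically the bookkeeping that identifies the penultimate graph $\dot{G}_{N-1}$ of the chain as a complete bipartite graph minus one edge carrying exactly one negative edge, so that Lemma \ref{lem: complete bipartite without an edge and only one negative edge} applies. The dichotomy $e=f$ versus $e\ne f$ — ruling out the first by balancedness and reducing the second to the known single-negative-edge lemma — is the step that makes the reduction go through; everything else is an application of results proved earlier.
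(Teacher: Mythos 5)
Your proof is correct and takes essentially the same route as the paper: embed $\dot G$ into an unbalanced $\dot K_{r,s}$ via Lemma \ref{lem: subgraphs chain}, bound $\lambda_1(\dot K_{r,s})$ by Theorem \ref{thm: complete bipartite}, and resolve the equality case by applying Lemma \ref{lem: complete bipartite without an edge and only one negative edge} to the penultimate $(rs-1)$-edge graph of the chain after switching. Your explicit $e=f$ versus $e\neq f$ dichotomy is precisely the point the paper handles implicitly, by observing that the switched graph $\dot H'$ is still unbalanced and therefore must retain $v_1w_1$ as its unique negative edge.
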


\begin{proof}
	Suppose that $|E(\dot{G})|< rs$, because otherwise the result holds (see Theorem \ref{thm: complete bipartite}). By Lemma \ref{lem: subgraphs chain}, there exists a signed graph $\dot H$ such that $|E(\dot{H})|=rs-1$, and
	\[\lambda_1(\dot G)\le \lambda_1(\dot H)\le \lambda_1(\dot{K}_{r,s}).\]
    If $\dot{K}_{r,s}$ is not switching isomorphic to $\dot{G}^*$, then by Theorem \ref{thm: complete bipartite}, 
	$\lambda_1(\dot G)\le \lambda_1(\dot H)\le\lambda_1(\dot{K}_{r,s})< \lambda_1(\dot{G}^*)$. Suppose that $\dot{K}_{r,s}$ is switching isomorphic to $\dot{G}^*$. Thus, there exists a diagonal matrix $D$ and a permutation matrix $P$ such that $A(\dot{G}^*)=P^{-1} D^{-1} A(\dot{K}_{r,s}) D \, P$. In addition, there exists an unbalanced signed graph $\dot{H'}$ switching isomorphic to $\dot{H}$ such that
	\[\lambda_1(\dot G)\le \lambda_1(\dot{H'})\le \lambda_1(\dot{G}^*),\]
	and with $v_1w_1$ as the only negative edge of $\dot{H'}$. By Lemma \ref{lem: complete bipartite without an edge and only one negative edge}, we have  
	\[\lambda_1(\dot G)\le \lambda_1(\dot{H'})< \lambda_1(\dot{G}^*).\]
\end{proof}
Now, we are ready to state the main result of this section, whose proof follows from Lemma \ref{lem: optimum presecribed partition}.
\begin{theorem}\label{thm: bipartite maximum}
	Let $\dot G$ be a connected unbalanced signed bipartite graph on $n$ vertices. If $r=\lfloor\frac n 2\rfloor$ and $s=\lceil\frac n 2\rceil$, then
	\[\rho(\dot G)\le \rho\left(\dot{G}^*\right),\]
	and the equality holds if and only if $\dot G$ is switching isomorphic to $\dot{G}^*$.
\end{theorem}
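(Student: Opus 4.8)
The plan is to invoke Lemma~\ref{lem: optimum presecribed partition} to reduce to a bipartition with fixed part sizes, and then to maximize the resulting bound over all ways of splitting $n$. Since $\dot G$ is connected, its underlying graph has a unique bipartition $\{X,Y\}$; set $r'=|X|$ and $s'=|Y|$ with $r'\le s'$ and $r'+s'=n$, and recall $r'\ge 2$ because $\dot G$ is unbalanced. Lemma~\ref{lem: optimum presecribed partition} then gives
\[
\rho(\dot G)\le f(r',s'):=\sqrt{\tfrac{r's'+\sqrt{(r's')^2-16(r'-1)(s'-1)}}{2}},
\]
with equality exactly when $\dot G$ is switching isomorphic to the extremal graph for that bipartition. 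It remains to prove that $f(r',s')$ is maximized, over all pairs with $r'+s'=n$ and $2\le r'\le s'$, precisely at $\{r',s'\}=\{\lfloor n/2\rfloor,\lceil n/2\rceil\}$, and that this maximum equals $\rho(\dot G^*)$.

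First I would rewrite $f$ through the single parameter $p:=r's'$. Using $(r'-1)(s'-1)=p-n+1$, the number $\mu:=f(r',s')^2$ is the larger root of the quadratic $h(\mu):=\mu^2-p\mu+4(p-n+1)$. Differentiating $h(\mu)=0$ implicitly in $p$ yields $\mu'=\frac{\mu-4}{2\mu-p}$, where $2\mu-p=\sqrt{p^2-16p+16n-16}>0$ is the positive square root attached to the larger root. Moreover $h(4)=16-4p+4(p-n+1)=4(5-n)<0$ for $n\ge 6$, so $4$ lies strictly between the two roots; hence $\mu>4$ and therefore $\mu'>0$. Thus $\mu$, and with it $f$, is a strictly increasing function of the product $p=r's'$ on the whole admissible range.

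Next I would use that, with the sum $r'+s'=n$ held fixed, the product $p=r'(n-r')$ is strictly increasing in $r'$ for $2\le r'\le\lfloor n/2\rfloor$, so it is uniquely maximized at $r'=\lfloor n/2\rfloor$, $s'=\lceil n/2\rceil$. Composing with the monotonicity of $f$ in $p$ gives $f(r',s')\le f(\lfloor n/2\rfloor,\lceil n/2\rceil)=\rho(\dot G^*)$, the last equality being Corollary~\ref{cor: polinomios de estrellas}, and the inequality is strict unless the partition is already balanced. The equality statement then follows by chaining: if $\rho(\dot G)=\rho(\dot G^*)$, the partition must be the balanced one, and within it Lemma~\ref{lem: optimum presecribed partition} forces $\dot G$ to be switching isomorphic to $\dot G^*$; the converse is immediate.

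The only delicate point is the monotonicity step, namely establishing $\mu>4$ uniformly, which is exactly where the standing bound $r'\ge 2$ and the effective hypothesis $n\ge 6$ enter through $h(4)=4(5-n)<0$. The remaining orders $n\le 5$ must be handled separately, but there the admissible pair is forced to be $(2,2)$ (for $n=4$) or $(2,3)$ (for $n=5$), which already equals $\{\lfloor n/2\rfloor,\lceil n/2\rceil\}$, so there is nothing to compare and the statement reduces directly to Lemma~\ref{lem: optimum presecribed partition}.
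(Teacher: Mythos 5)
Your proposal is correct and takes the same route as the paper: the paper's entire proof of Theorem~\ref{thm: bipartite maximum} is a one-line appeal to Lemma~\ref{lem: optimum presecribed partition}, which is exactly your starting point. In fact, you go further than the paper by explicitly supplying the step it leaves implicit---that the bound $f(r',s')$ is strictly increasing in the product $r's'$ (via $\mu>4$ from $h(4)=4(5-n)<0$ for $n\ge 6$), hence maximized at the balanced split $\{\lfloor n/2\rfloor,\lceil n/2\rceil\}$---together with a correct separate treatment of $n\le 5$, so your argument is sound and, if anything, more complete than the paper's.
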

\section{Conclusions and further results}\label{sec: Conclusion and further results}
In Theorem \ref{thm: complete bipartite graphs negative edges induce a tree}, we identified the unbalanced signed complete bipartite graphs within the family $\mathcal {KT}_{r,s,m}$, with $r\le s$, that achieve the maximum spectral radius when $m < \frac{r}{2}$. In Theorem \ref{thm: bipartite maximum}, we established the list of unbalanced signed bipartite graphs on $n$ vertices that attain the maximum spectral radius. It is noteworthy that the problem of finding those graphs with minimum spectral radius  has ready been solved. Specifically, it has been proved that the unbalanced signed graphs on $n$ vertices with the minimum spectral radius are bipartite for $n\ge 4$ \cite[Theorem 3.5]{BrunettiandStanic2022} (see Fig. \ref{fig: bipartitos no balanceados minimos}).

\begin{theorem} 
For $n \ge 4$, up to switching isomorphism, the unbalanced  signed bipartite graphs minimizing the spectral radius are the following   
\begin{enumerate}
 \item $\dot{C}_4^{-}$ for $n=4$, 
 \item $\dot{\mathcal{Q}}_{0,1}$ for $n=5$,
 \item $\dot{C}_6^{-}$, $\dot{\mathcal{Q}}_{1,1}$ and $\dot{B}_{6}$ for $n=6$, 
 \item $\dot{B}_{7}$ for $n=7$,
 \item $\dot{U}_{1}$ for $n=8$,
 \item $\dot{\mathcal{Q}}_{r,s}$ for $n$ odd and $n \ge 9$, where $r=\lfloor\frac{n-4}{2}\rfloor$ and $s=\lceil\frac{n-4}{2}\rceil$,
 \item $\dot{\mathcal{Q}}_{r,r}$ or $\dot{C}_n^{-}$ for $n$ even and $n \ge 9$, where $r=\frac{n-4}{2}$.
\end{enumerate}
\end{theorem}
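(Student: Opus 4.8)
The plan is to use that a signed bipartite graph is sign-symmetric, so that $\rho(\dot G)=\lambda_1(\dot G)$ and the whole spectrum is symmetric about the origin; minimizing the spectral radius therefore coincides with minimizing the index $\lambda_1$. Since a signed graph with no cycle is balanced and every cycle of a bipartite graph is even, every connected unbalanced signed bipartite graph contains an unbalanced (negative) even cycle. A minimizer over the finite set of connected unbalanced signed bipartite graphs on $n$ vertices (taken up to switching isomorphism) exists, and the first goal is to show it is \emph{unicyclic}, i.e.\ it carries a single such cycle and no further edges. The engine here is a deletion-monotonicity statement for the index: if an edge can be removed while preserving connectivity and unbalancedness without increasing $\lambda_1$, then a minimizer has no superfluous edges. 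Unlike the classical nonnegative case, deletion does not lower $\lambda_1$ for an arbitrary edge of a signed graph, so the argument must delete an edge that is \emph{positive} relative to a switching representative carrying a nonnegative $\lambda_1$-eigenvector, which exists by Lemma \ref{lem: spectral radius switching equivalent class}; this is exactly the reverse reading of the embedding sequence built in Lemma \ref{lem: subgraphs chain}. Verifying that such deletions always suffice to reach a unicyclic graph is the first main obstacle.

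Second, I would normalize an unbalanced unicyclic minimizer. By Lemma \ref{lem: switching equivalent} one switches so that the unique even cycle carries exactly one negative edge and every edge of the attached forest is positive. Two grafting reductions then cut down the candidates. First, each tree hanging off the cycle must be a \emph{path}: a straightening transformation that replaces a branch by a pendant path strictly decreases $\lambda_1$ unless the tree is already a path; this is proved by passing to the block $B$ of $A(\dot G)^2$ as in \eqref{ec: Cuadrado de A con Br y Bs}, so that $\rho(\dot G)=\sqrt{\rho(B)}$ via Corollary \ref{cor: square spectrum}, and comparing characteristic polynomials by Lemma \ref{lem: comparing polynomials}. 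Second, the two resulting pendant paths should be balanced in length, which is the origin of $r=\lfloor (n-4)/2\rfloor$ and $s=\lceil (n-4)/2\rceil$, while the cycle length is traded against the tail lengths, a trade-off resolved only by the explicit computation below. The configurations surviving these reductions are the pure negative cycle $\dot C_n^{-}$ (bipartite, hence admissible, only when $n$ is even, an odd cycle failing bipartiteness) and the unbalanced quadrilateral $\dot{\mathcal Q}_{r,s}$, namely $\dot C_4^{-}$ with two balanced pendant paths.

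Third, I would compare the two survivors directly. Using $\rho(\dot G)=\sqrt{\rho(B)}$ and reducing $B$ by an equitable partition along the symmetry of the configuration (Lemma \ref{lem: qotient matrix}, with Theorem \ref{thm: Perron} identifying the relevant root as the Perron value of a nonnegative quotient), I would write down the low-degree characteristic polynomials of the quotient matrices of $\dot C_n^{-}$ and of $\dot{\mathcal Q}_{r,r}$ and order their largest roots via Lemma \ref{lem: comparing polynomials}. This is where the stated dichotomy appears: for odd $n$ the cycle is unavailable and only $\dot{\mathcal Q}_{r,s}$ remains; for even $n$ the two reduced polynomials share their top root, producing the exact tie between $\dot C_n^{-}$ and $\dot{\mathcal Q}_{r,r}$.

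Finally, the small orders $n\in\{4,5,6,7,8\}$ fall outside the asymptotic straightening regime and must be settled by a finite, exhaustive comparison of spectral radii; this is what produces the sporadic minimizers $\dot B_6,\dot B_7$ and $\dot U_1$ (and explains why the uniform description only begins at $n\ge 9$). The second main obstacle, beyond the deletion-monotonicity reduction, is precisely the exact polynomial identity yielding the even-$n$ tie $\rho(\dot C_n^{-})=\rho(\dot{\mathcal Q}_{r,r})$: it does not follow from any monotonicity inequality and instead requires recognizing the common value, namely $2\cos(\pi/n)$, as a shared root of both reduced characteristic polynomials.
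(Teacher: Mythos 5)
First, a point of orientation: the paper does \emph{not} prove this theorem. It appears in the concluding section as an imported result --- the justification being that, by \cite[Theorem 3.5]{BrunettiandStanic2022}, the minimizers of the spectral radius among \emph{all} connected unbalanced signed graphs on $n\ge 4$ vertices happen to be bipartite, so the list of bipartite minimizers is exactly the list already determined by Brunetti and Stani\'c. Your proposal attempts to rederive that classification from scratch, so there is no ``paper proof'' to match it against; it has to stand on its own, and it does not.

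The fatal step is the very first reduction: the claim that a minimizer may be taken unicyclic by repeatedly deleting an edge that is positive with respect to a switching carrying a nonnegative $\lambda_1$-eigenvector, preserving connectivity and unbalancedness without increasing $\lambda_1$. The quadratic-form inequality behind such a deletion is sound, but the existence of a deletable edge of this kind is not, and your reduction contains no mechanism that depends on $n$: if it worked, a unicyclic minimizer would exist for \emph{every} $n\ge 4$. The theorem you are proving refutes this. For $n=7$ the unique minimizer up to switching isomorphism is $\dot B_7$, which has nine edges, and for $n=8$ it is $\dot U_1$, the signed cube, which has twelve edges and (since every face is unbalanced, so the biadjacency matrix is a weighing matrix) spectrum $\pm\sqrt 3$, each with multiplicity four. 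Neither is unicyclic, and uniqueness forces every admissible deletion from these graphs to \emph{strictly increase} the index: for instance, deleting the chord $w_2w_5$ of $\dot B_7$ leaves a connected, unbalanced, bipartite signed graph on $7$ vertices, which therefore has strictly larger spectral radius. Indeed $\rho(\dot U_1)=\sqrt 3 < 2\cos(\pi/8)=\rho(\dot C_8^-)$, so at $n=8$ a dense graph strictly beats every sparse candidate --- ``fewer edges'' is simply not better in the signed minimization problem, which is precisely what makes it hard. Relegating $n\le 8$ to ``exhaustive small cases'' does not repair this, because nothing in your deletion argument explains why it should become valid at $n=9$; supplying that distinction is the actual content of the theorem. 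The later steps (straightening branches into paths, antipodal attachment of the two paths on the $C_4$, and the tie $\rho(\dot C_n^-)=\rho(\dot{\mathcal Q}_{r,r})=2\cos(\pi/n)$ for even $n$) assert true facts but prove none of them; even granting all of them, the argument cannot start, because its first step is false.
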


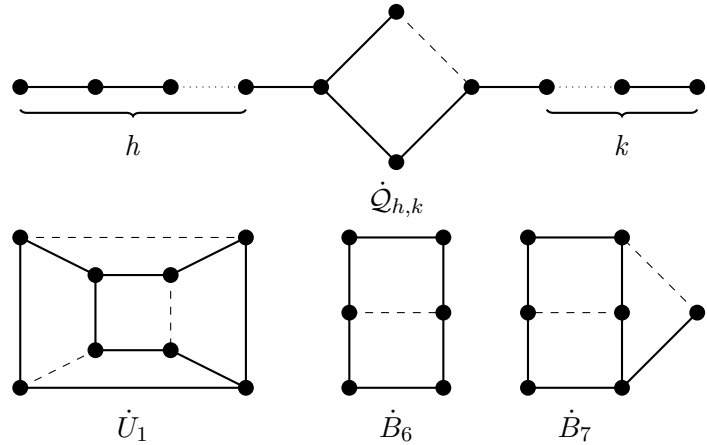
\begin{figure}[h!]
\begin{center}
	
	\begin{tikzpicture}[]
	
	\coordinate (w1) at (-5,0);
	\coordinate (w2) at (-4,0);
	\coordinate (w3) at (-3,0); 
	\coordinate (w4) at (-2,0); 
	\coordinate (w5) at (-1,0);
	\coordinate (w6) at (0,1);
	\coordinate (w7) at (0,-1); 
	\coordinate (w8) at (1,0); 
	\coordinate (w9) at (2,0); 
	\coordinate (w10) at (3,0);
	\coordinate (w11) at (4, 0);

	\foreach \punto in {1,...,11}
	\fill (w\punto) circle(3pt);
	
	\node[] at (0,-1.5) {$\dot{\mathcal{Q}}_{h,k}$};

	\draw[thick] (w1)--(w2);
	\draw[thick] (w2)--(w3);
	\draw[dotted] (w3)--(w4);
	\draw[thick] (w4)--(w5);
	\draw[thick] (w5)--(w6);
	\draw[thick] (w5)--(w7);
	\draw[dashed] (w6)--(w8);
	\draw[thick] (w7)--(w8);
	\draw[thick] (w8)--(w9);
	\draw[dotted] (w9)--(w10);
	\draw[thick] (w10)--(w11);
	
	\draw [thick, decoration={brace, raise=0.3cm, mirror}, decorate] 
    (w1) -- (w4) node [pos=0.5,anchor=north, yshift=-0.5cm] {\textit{h}};
    
    \draw [thick, decoration={brace, raise=0.3cm, mirror}, decorate] 
    (w9) -- (w11) node [pos=0.5,anchor=north, yshift=-0.5cm] {\textit{k}};
    
    \end{tikzpicture}
    
    \begin{tikzpicture}
    \coordinate (w1) at (-5,-4);
	\coordinate (w2) at (-2,-4);
	\coordinate (w3) at (-2,-6); 
	\coordinate (w4) at (-5,-6); 
	\coordinate (w5) at (-4,-4.5);
	\coordinate (w6) at (-3,-4.5);
	\coordinate (w7) at (-3,-5.5); 
	\coordinate (w8) at (-4,-5.5); 
		
	\foreach \punto in {1,...,8}
	\fill (w\punto) circle(3pt);

	\node[] at (-3.5,-6.5) {$\dot{U}_1$};

	\draw[dashed] (w1)--(w2);
	\draw[thick] (w2)--(w3);
	\draw[thick] (w3)--(w4);
	\draw[thick] (w4)--(w1);
	\draw[thick] (w5)--(w6);
	\draw[dashed] (w6)--(w7);
	\draw[thick] (w7)--(w8);
	\draw[thick] (w8)--(w5);
	\draw[thick] (w1)--(w5);
	\draw[thick] (w2)--(w6);
	\draw[thick] (w3)--(w7);
	\draw[dashed] (w4)--(w8);
    
    \coordinate (w1) at (-0.625,-4);
	\coordinate (w2) at (-0.625,-5); 
	\coordinate (w3) at (-0.625,-6); 
	\coordinate (w4) at (0.625,-6); 
	\coordinate (w5) at (0.625,-5);
	\coordinate (w6) at (0.625,-4);
		
	\foreach \punto in {1,...,6}
	\fill (w\punto) circle(3pt);

	\node[] at (-0,-6.5) {$\dot{B}_6$};

	\draw[thick] (w1)--(w2);
	\draw[thick] (w2)--(w3);
	\draw[thick] (w3)--(w4);
	\draw[thick] (w4)--(w5);
	\draw[thick] (w5)--(w6);
	\draw[thick] (w6)--(w1);
	\draw[dashed] (w2)--(w5);
    
    \coordinate (w1) at (3,-4);
	\coordinate (w2) at (3,-5); 
	\coordinate (w3) at (3,-6); 
	\coordinate (w4) at (1.75,-6); 
	\coordinate (w5) at (1.75,-5);
	\coordinate (w6) at (1.75,-4);
	\coordinate (w7) at (4,-5);
		
	\foreach \punto in {1,...,7}
	\fill (w\punto) circle(3pt);

	\node[] at (2.375,-6.5) {$\dot{B}_7$};

	\draw[thick] (w1)--(w2);
	\draw[thick] (w2)--(w3);
	\draw[thick] (w3)--(w4);
	\draw[thick] (w4)--(w5);
	\draw[thick] (w5)--(w6);
	\draw[thick] (w6)--(w1);
	\draw[dashed] (w2)--(w5);
    \draw[dashed] (w7)--(w1);
    \draw[thick] (w7)--(w3);
	\end{tikzpicture}
	
\end{center}

\caption{Unbalanced signed bipartite graphs with minimum spectral radius}\label{fig: bipartitos no balanceados minimos}

\end{figure}

\bibliographystyle{plain}
\bibliography{referencias}
\end{document}